\newtheorem{theorem}{Theorem}[section]
\newtheorem{lemma}[theorem]{Lemma}
\newtheorem{corollary}[theorem]{Corollary}
\newtheorem{proposition}[theorem]{Proposition}
\newtheorem*{question*}{Question}
\theoremstyle{definition}
\newtheorem{definition}[theorem]{Definition}
\theoremstyle{remark}
\newtheorem*{remarks}{Remarks}
\numberwithin{equation}{section}
\newcommand {\N}{\mathbb{N}} 
\newcommand {\R}{\mathbb{R}} 
\newcommand{\BB}{\mathcal{B}}
\newcommand{\PP}{\mathcal{P}}
\newcommand{\UU}{\mathcal{U}}
\newcommand{\VV}{\mathcal{V}}
\DeclareMathOperator{\ord}{ord}
\DeclareMathOperator{\INT}{Int}
\DeclareMathOperator{\inte}{Int}
\begin{document}
\title[An analogue of Fekete's lemma]{An analogue of Fekete's lemma for subadditive functions on cancellative amenable semigroups}
\date{\today}
\author[T.Ceccherini-Silberstein]{Tullio Ceccherini-Silberstein}
\address{Dipartimento di Ingegneria, Universit\`a del Sannio, C.so
Garibaldi 107, 82100 Benevento, Italy}
\email{tceccher@mat.uniroma3.it}
\author[M.Coornaert]{Michel Coornaert}
\address{Institut de Recherche Math\'ematique Avanc\'ee,
UMR 7501, Universit\'e  de Strasbourg et CNRS, 7 rue Ren\'e-Descartes, 67000 Strasbourg, France}
\email{coornaert@math.unistra.fr}
\author[F.Krieger]{Fabrice Krieger}
\address{Lyc\'ee G\'en\'eral et technologique Adam de Craponne - 218 rue Chateauredon  13300 Salon-de-Provence, France}
\email{kriegerfabrice@googlemail.com}
\subjclass{43A07, 20M20, 37A35, 37B40}
\keywords{amenable semigroup, F\o lner net, subadditivity, Fekete's lemma}
\date{\today}
\begin{abstract}
We prove an analogue of Fekete's lemma for subadditive right-subinvariant functions defined on the finite subsets of a cancellative left-amenable semigroup.
This extends results previously obtained in the case of amenable groups by E.~Lindenstrauss and B.~Weiss and by M.~Gromov.
\end{abstract}

\maketitle

\section{Introduction}

Fekete's lemma \cite{fekete} is a classical result in undergraduate-level analysis.
 It states that if $(u_n)_{n \geq1}$ is a subadditive sequence of real numbers then 
the sequence $\left(\dfrac{u_n}{n}\right)_{n \geq 1}$ has a limit
$\lambda \in \R \cup \{- \infty\}$ as $n$ tends to infinity (see for example \cite[Proposition
9.6.4]{katok-hasselblatt}).
 The goal of the present paper is to give an analogue of Fekete's lemma for subadditive and 
right-subinvariant functions defined on the set of all finite subsets of a cancellative left-amenable semigroup.
In order to state our main result, let us first recall some basic definitions and 
introduce notation.
\par
Let $S$ be a semigroup, i.e., a set equipped with an associative binary operation.
We denote by $\PP(S)$  the set of all subsets of $S$.
One says that  $S$ is \emph{left-amenable} if
there exists a finitely additive left-invariant probability measure defined on $\PP(S)$, that is, a map
$\mu \colon \PP(S) \to [0,1]$ satisfying the following conditions:
\begin{enumerate}[\rm ({A}1)]
\item 
$\mu(A \cup B) = \mu(A) + \mu(B)$ for all $A,B \in \PP(S)$ such that $A \cap B = \varnothing$;
\item 
$\mu(S) = 1$;
\item 
$\mu(L_s^{-1}(A)) = \mu(A)$ for all $s \in S$ and $A \in \PP(S)$, 
\end{enumerate}
where $L_s \colon S \to S$ denotes the left-multiplication by $s$, that is, the map  defined by 
$L_s(t)=st$ for all $t \in S$.
\par
 One says that $S$ is \emph{right-amenable} if its opposite semigroup is left-amenable.
This is equivalent to the existence of a finitely additive right-invariant probability measure defined on $\PP(S)$, that is, a map
$\mu \colon \PP(S) \to [0,1]$ satisfying (A1), (A2) and\\
\vspace{0.02cm}

\noindent
(A3') \ $\mu(R_s^{-1}(A)) = \mu(A)$ for all $s \in S$ and $A \in \PP(S)$,\\
\vspace{0.02cm}

\noindent
where $R_s \colon S \to S$ is the right-multiplication by $s$, that is, the map defined by $R_s(t)=ts$ for all $t \in S$. 
\par
A semigroup is called \emph{amenable} if it is both left-amenable and 
right-amenable.
\par
The notion of an amenable group  was  introduced in 1929  by J.~von~Neumann  
\cite{vN}. 
His original motivation was the study of the Banach-Tarski paradox.
The theory of amenable semigroups was subsequently developed in the 1940s and 1950s 
by M.~Day (see \cite{day-trans}, \cite{day-illinois}, \cite{day-survey}, and the references therein).
Day \cite{day-trans} showed in particular  that every commutative semigroup is amenable, 
thus extending a result previously obtained by von Neumann for groups.
Actually, when passing from groups to semigroups, one encounters  many new phenomena.
For example, left-amenability and right-amenability are equivalent for groups, every finite group is amenable, and every subgroup of an amenable group is itself amenable.
On the other hand, in contrast with the group case, there exist finite semigroups that
are left-amenable but not right-amenable and finite amenable semigroups 
containing semigroups that are neither left-amenable nor right-amenable. 
\par
In the group setting, E.~F\o lner \cite{folner} gave a remarkable combinatorial characterization of amenability by showing that a group $S$ is amenable if and only if it satisfies the following condition:

\begin{enumerate}
\item[{\rm (FC)}] 
for every finite subset $K \subset S$ and every real number $\varepsilon > 0$, there exists a 
non-empty finite subset $F \subset S$ such that  
\begin{equation} 
\label{e:cond-folner-2-ordre-reverser}
\vert kF \setminus F \vert \leq \varepsilon  \vert F \vert \quad \text{for all $k \in K$.} 
\end{equation}
\end{enumerate}
(Here and in the sequel, we use $\vert \cdot \vert$ to denote cardinality of finite sets.)
 Condition (FC)  is known as the \emph{F\o lner condition}.
 \par
In his thesis, A.~Frey \cite{frey} adapted F\o lner arguments to semigroups and showed that every left-amenable semigroup  $S$   satisfies  condition (FC) (see \cite[Theorem 3.5]{namioka} for a simpler proof).
However, (FC) is a necessary but not sufficient condition for left-amenability of semigroups.
Examples of semigroups that  are not left-amenable but satisfy (FC)
are provided by finite semigroups that are not left-amenable 
(observe that  any finite semigroup $S$ trivially satisfies (FC) by taking $F = S$). 
\par
Condition (FC) is equivalent to the 
existence of a directed net $(F_i)_{i \in I}$ of non-empty finite subsets of $S$ such that
\begin{equation}
\label{e:Folner-net-condition}
\lim_{i} \frac{\vert s F_i \setminus F_i \vert}{\vert F_i \vert} = 0 \quad \text{for all $s \in S$.}
\end{equation}
Indeed, if $S$ satisfies (FC), we can construct a directed net $(F_i)_{i \in I}$ 
satisfying \eqref{e:Folner-net-condition} in the following way.
We first take as $I$ the directed set consisting of all pairs $(K,\varepsilon)$, where $K$ is a finite subset of $S$ and  $\varepsilon > 0$, with the partial ordering on $I$ defined by 
$(K_1,\varepsilon_1) \leq (K_2,\varepsilon_2)$  if and only if $K_1 \subset K_2$ and 
$\varepsilon_2 \leq \varepsilon_1$. Then,
for each $i = (K,\varepsilon) \in I$, we take as $F_i$ one of the non-empty finite subsets $F \subset S$ satisfying \eqref{e:cond-folner-2-ordre-reverser}.
Conversely,  suppose that $(F_i)_{i \in I}$ is a directed net of non-empty finite subsets of
$S$ satisfying \eqref{e:Folner-net-condition}.
Let  $K \subset S$ be a finite subset  and $\varepsilon > 0$.
Then, for every $k \in K$, we can find $i_k \in I$ such that 
$|kF_i \setminus F_i| \leq \varepsilon |F_i|$ for all $i \geq i_k$.
To get a non-empty finite subset $F \subset S$ satisfying \eqref{e:cond-folner-2-ordre-reverser},
it suffices to take $F := F_i$, where $i \in I$ is such that $i \geq i_k$ for every $k \in K$ (the existence of such an index $i$ follows from the fact that $I$ is directed and $K$ is finite).
  \par
A directed net $(F_i)_{i \in I}$ of non-empty finite subsets of $S$ satisfying 
\eqref{e:Folner-net-condition}
  is called a \emph{left-F\o lner net} of $S$.
  \par
Recall that an element $s$ in a semigroup $S$ is called \emph{left-cancellable} (resp. \emph{right-cancellable}) 
if the map $L_s$ (resp. $R_s$) is injective.
One says that $s$ is cancellable if it is both left-cancellable and right-cancellable.
The semigroup $S$ is called
\emph{left-cancellative} (resp. \emph{right-cancellative}, resp. \emph{cancellative}) if every element in $S$ is
 left-cancellable  (resp.  right-cancellable, resp. cancellable).
 \par
When $S$ is a  left-cancellative semigroup, it is known that the left-amenability of $S$ is equivalent to the F\o lner condition (FC), and hence to the existence of a left-F\o lner net  (see \cite[Corollary 4.3]{namioka}).
  \par
 The purpose of the present  paper is to establish the following result.

\begin{theorem}
\label{theOWGromov}
Let $S$ be a cancellative left-amenable semigroup
and let $\PP_{fin}(S)$ denote the set of all finite subsets of $S$.
Let  $h\colon \PP_{fin}(S) \to \R$ be a real-valued map satisfying the following conditions:
\begin{enumerate}[\rm (H1)]
\item 
$h$ is subadditive, i.e.,
\[
h(A \cup B)\le h(A)+h(B)  \ \text{ for all $A, B \in \PP_{fin}(S)$};
\]
\item 
$h$ is right-subinvariant, i.e.,
\[
h(As)\leq h(A) \ \text{ for all $s\in S$ and $A \in \PP_{fin}(S)$};
\]
\item 
$h$ is bounded on singletons, i.e., there exists a real number $M \geq 0$ such that
\[
h(\{s\}) \leq M \ \text{ for all $s\in S$}.
\]
 \end{enumerate}
 Then there exists a real number $\lambda  \geq 0$, depending only on $h$,  such that
the net $\left(\dfrac{h(F_i)}{\vert F_i\vert}\right)_{i \in I}$ converges to $\lambda$
for every left-F\o lner net $(F_i)_{i \in I}$ of $S$. 
 \end{theorem}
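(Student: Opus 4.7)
\medskip
\noindent\textbf{Proof plan.} First, applying (H1) with $A \subseteq B$ gives $h(B) = h(A\cup B) \le h(A) + h(B)$, so $h$ is automatically non-negative on $\PP_{fin}(S)$; combined with (H3) this yields $0 \le h(E) \le M|E|$ for every nonempty finite $E$. I set
\[
\lambda := \inf_{E} \frac{h(E)}{|E|},
\]
the infimum running over all nonempty $E \in \PP_{fin}(S)$, so $\lambda \in [0,M]$. For every left-F\o lner net $(F_i)_{i \in I}$, each term satisfies $h(F_i)/|F_i| \ge \lambda$ by definition of $\lambda$, so $\liminf_i h(F_i)/|F_i| \ge \lambda$ trivially. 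The plan is to prove the matching bound $\limsup_i h(F_i)/|F_i| \le \lambda$, which will simultaneously give convergence and show that the limit depends only on $h$.

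Fix $\varepsilon > 0$ and a nonempty finite $E \subset S$ with $h(E)/|E| \le \lambda + \varepsilon$. Generalising the ``divide $n$ by $m$'' step in the classical proof of Fekete's lemma, I would try to pack $F_i$ by pairwise disjoint right-translates $Es_1, \ldots, Es_N$ of $E$ (with centres $s_j \in S$) whose union lies in $F_i$. By right-cancellativity $|Es_j| = |E|$, so $N|E| = \bigl|\bigcup_j Es_j\bigr|$; applying (H1) once to split $F_i$ into the union and its complement, then iteratively to the tiles and to the singletons of the complement, and combining with (H2) and (H3), one gets
\[
h(F_i) \;\le\; \sum_{j=1}^{N} h(Es_j) \,+\, M\Bigl|F_i \setminus \bigcup_j Es_j\Bigr| \;\le\; N\,h(E) \,+\, M\Bigl|F_i \setminus \bigcup_j Es_j\Bigr|,
\]
so that after dividing by $|F_i|$
\[
\frac{h(F_i)}{|F_i|} \;\le\; (\lambda+\varepsilon)\,\frac{N|E|}{|F_i|} \,+\, M\Bigl(1 - \frac{N|E|}{|F_i|}\Bigr).
\]
Hence it would suffice to produce, for each sufficiently large $i$, such a packing with coverage $N|E|/|F_i|$ arbitrarily close to $1$, and then let $\varepsilon \to 0$.

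The packing is where the real work lies, and I expect it to be the main obstacle. One ingredient is easy: using left-cancellativity, the right-$E$-interior $F_i^\circ := \{s \in F_i : Es \subseteq F_i\}$ satisfies
\[
|F_i \setminus F_i^\circ| \;\le\; \sum_{e \in E} |eF_i \setminus F_i|,
\]
because $L_e$ sends $\{s \in F_i : es \notin F_i\}$ injectively into $eF_i \setminus F_i$; the right-hand side is $o(|F_i|)$ by the left-F\o lner condition. The difficulty is that a naive greedy selection of pairwise-disjoint right-translates with centres in $F_i^\circ$ only covers an $O(1/|E|)$ fraction of $F_i$: each chosen centre rules out at most $|E|^2$ further candidates by cancellativity, giving only $N \ge |F_i^\circ|/|E|^2$. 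To push coverage close to $1$, I would import (or establish in situ) an Ornstein--Weiss-style quasi-tiling theorem adapted to cancellative left-amenable semigroups, working simultaneously with a finite chain $E_1 \subseteq E_2 \subseteq \cdots \subseteq E_n$ of sufficiently F\o lner test sets, all satisfying $h(E_j)/|E_j| \le \lambda + \varepsilon$, so that any sufficiently F\o lner $F_i$ admits an $\varepsilon$-quasi-tiling by right-translates of the $E_j$'s. With such a lemma the estimate above extends, with $N\,h(E)$ replaced by $\sum_j N_j h(E_j) \le (\lambda+\varepsilon)\sum_j N_j|E_j|$, and the argument concludes as sketched. Establishing this quasi-tiling lemma in the semigroup setting---where one has no group inverses and must work purely from left-amenability (via the F\o lner condition) and cancellativity---is the crux of the whole proof.
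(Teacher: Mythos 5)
Your plan founders at its very first step: you set $\lambda := \inf_{E} h(E)/|E|$ over \emph{all} nonempty finite sets and then aim to prove $\limsup_i h(F_i)/|F_i| \le \lambda$. That inequality is false in general, already for $S=\Z$. Take $E_0=\{0,1,3\}$ and define $h(A) := 3\cdot\min\bigl\{N : A \subset \bigcup_{j=1}^{N}(E_0+t_j),\ t_j\in\Z\bigr\}$ (three times the covering number of $A$ by translates of $E_0$, with $h(\varnothing)=0$). This $h$ satisfies (H1)--(H3) and $h(E_0)/|E_0|=1$, so your $\lambda\le 1$. But $E_0$ does not tile $\Z$: the translate covering $2$ must be placed at $2$, $1$ or $-1$, and in each case the point $4$, $3$ or $1$ respectively cannot be covered without a collision. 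Consequently the minimal number $c_n$ of translates of $E_0$ covering $[0,n)$ satisfies $\lim_n c_n/n > 1/3$ (if optimal coverings had multiplicity tending to $1$, averaging their position sets over translations and passing to a weak-$*$ limit would produce an invariant random exact tiling of $\Z$ by $E_0$, which does not exist), hence $\lim_n h([0,n))/n = 3\lim_n c_n/n > 1 \ge \lambda$. So the F\o lner limit is in general strictly larger than the global infimum, and no packing argument can close the gap you set up. This also breaks your proposed construction of the hierarchy: you want sets $E_1\subset\cdots\subset E_n$ that are simultaneously ``sufficiently F\o lner'' and satisfy $h(E_j)/|E_j|\le\lambda+\varepsilon$, but near-minimizers of $h(E)/|E|$ need not be anywhere near invariant (in the example, $E_0$ is a minimizer while all highly invariant sets have ratio close to the strictly larger limit). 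The paper avoids this by defining $\lambda:=\liminf_i h(F_i)/|F_i|$ along the given net and extracting the tiles $K_1,\dots,K_n$ \emph{from the net itself}, which is exactly what guarantees both $h(K_j)/|K_j|\le\lambda+\varepsilon$ and the relative-invariance condition \eqref{e:conditions-Kj} needed for the filling theorem; the independence of the limit from the choice of net then requires a separate argument (the paper merges two F\o lner nets into one over a suitable directed set).

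Beyond this, the quasi-tiling statement you propose to ``import or establish in situ'' is the technical heart of the whole proof and is not supplied by your proposal: the paper proves it from scratch for cancellative semigroups in Section \ref{sec:filling} (Lemma \ref{lempreliminaire} and Theorem \ref{th:quasi-tile}), precisely because the group-theoretic arguments using inverses are unavailable. Your remaining bookkeeping -- the bound $|F_i\setminus\inte_E(F_i)|\le\sum_{e\in E}|eF_i\setminus F_i|$ (the paper's Lemma \ref{boundary-properties}) and the estimate $h(D)\le\sum_j h(K_jP_j)+M|D'|$ via (H1), (H2), (H3), right-cancellativity and $\varepsilon$-disjointness -- does match the computation in Section \ref{sec:proof-main-result}, but those are the routine parts; with $\lambda$ defined as the infimum the overall strategy cannot be repaired.
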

 
 Observe that conditions (H3) is implied by (H2) when $S$ admits an element $s_0$ such that 
the map  $L_{s_0}$ is onto.
 Indeed, in this case, (H2)   gives us  $h(\{s\}) \leq h(\{s_0\})$ for all $s \in S$.   
 This happens for example when $S$ is a monoid (i.e., $S$ admits an identity element) since we can then take 
 $s_0 = 1_S$.
Thus, an immediate consequence of Theorem \ref{theOWGromov} is the following result.

\begin{corollary}
 \label{cor-theOWGromov}
Let $S$ be a cancellative left-amenable monoid
 and let  $h\colon \PP_{fin}(S) \to \R$ be a subadditive and right-subinvariant map. 
 Then there exists a real number $\lambda  \geq 0$, depending only on $h$,  such that
the net $\left(\dfrac{h(F_i)}{\vert F_i\vert}\right)_{i \in I}$ converges to $\lambda$
for every left F\o lner net $(F_i)_{i \in I}$ of $S$. 
  \end{corollary}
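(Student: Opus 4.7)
The plan is to mirror the Ornstein-Weiss strategy used in the amenable group case. I set
\[
\lambda := \inf\bigl\{h(F)/|F| : F \in \PP_{fin}(S)\bigr\}
\]
and first verify $\lambda \in [0,M]$. Iterating (H1) along the decomposition $F = \bigcup_{s \in F}\{s\}$ and using (H3) gives $h(F) \leq M|F|$, so $\lambda \leq M$; applying (H1) to the trivial union $A \cup A = A$ yields $h(A) \leq 2h(A)$, hence $h \geq 0$ and $\lambda \geq 0$. The desired limit will be this $\lambda$, and only the upper bound requires work since $h(F_i)/|F_i| \geq \lambda$ holds for every $i$ by definition of the infimum, which immediately gives $\liminf_i h(F_i)/|F_i| \geq \lambda$.

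For the upper bound, I fix $\varepsilon > 0$ and choose a finite set $A \subset S$ with $h(A)/|A| < \lambda + \varepsilon$. The core step is a semigroup quasi-tiling claim: for any $\delta > 0$, for $i$ sufficiently far in $I$ there exist $s_1, \ldots, s_k \in S$ such that the right-translates $As_1, \ldots, As_k$ are pairwise disjoint subsets of $F_i$ and cover at least $(1-\delta)|F_i|$ elements of $F_i$. Granted this, (H1) and (H2) yield
\[
h(F_i) \leq \sum_{j=1}^{k} h(As_j) + h\Bigl(F_i \setminus \bigcup_{j} As_j\Bigr) \leq k \cdot h(A) + M\delta|F_i|,
\]
while right-cancellativity gives $|As_j| = |A|$ and hence $k \leq |F_i|/|A|$. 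Dividing by $|F_i|$ produces $h(F_i)/|F_i| \leq h(A)/|A| + M\delta < \lambda + \varepsilon + M\delta$; letting $\delta \to 0$ and then $\varepsilon \to 0$ concludes the proof.

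The main obstacle is establishing the semigroup quasi-tiling. In the group setting, Ornstein-Weiss use inverses to shift tiles into place, a tool unavailable in our semigroup. The two available substitutes are right-cancellativity, which guarantees $|As| = |A|$, and the left-F\o lner property, which via left-cancellativity ensures that
\[
\bigl|\{s \in F_i : As \subset F_i\}\bigr| \;\geq\; |F_i| - \sum_{a \in A}|aF_i \setminus F_i|
\]
comprises almost all of $F_i$ eventually, so there is an abundance of candidate tiles. Converting this abundance into a near-covering by pairwise \emph{disjoint} translates is the combinatorial heart of the argument; I expect it to require either a careful greedy selection with explicit bookkeeping of the residual, or the full Ornstein-Weiss construction using a finite nested family $A_1 \subset \cdots \subset A_n$ of increasingly invariant sets, each still satisfying $h(A_j)/|A_j| < \lambda + \varepsilon$. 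This combinatorial step—showing that left-F\o lner sets in a cancellative semigroup admit an almost-tiling by right-translates of a near-minimizer $A$—will be the hardest part of the proof.
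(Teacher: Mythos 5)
There is a genuine gap, and it sits exactly where you predict: the ``semigroup quasi-tiling claim'' is not only unproven but, as you state it, false. You ask for pairwise \emph{disjoint} right-translates of a \emph{single} near-minimizer $A$ covering at least $(1-\delta)|F_i|$; already in $\Z$ (or any amenable group) an arbitrary finite set need not admit such exact disjoint $(1-\delta)$-coverings of invariant sets for small $\delta$ --- this is precisely why the Ornstein--Weiss machinery works with $\varepsilon$-disjoint families and with a whole sequence of tiles rather than one. Compare with the paper: the filling lemma (Lemma \ref{lempreliminaire}) extracts from a single tile $K$ only a disjointifiable family covering a fraction of order $\varepsilon(1-\alpha(\Omega,K))$ of $\Omega$, and one needs the full filling theorem (Theorem \ref{th:quasi-tile}), iterating over $n$ tiles $K_1,\dots,K_n$, to push the uncovered remainder below $\varepsilon|D|$. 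Moreover, even if you replace your claim by the correct multi-tile statement, your choice of $\lambda$ as the \emph{global} infimum $\inf_F h(F)/|F|$ creates a second obstruction: the tiles entering the filling theorem must themselves be nearly invariant (conditions \eqref{e:conditions-Kj} and \eqref{e:condition-D}), and a near-minimizer of the global infimum need not be nearly invariant, nor is it clear that nearly invariant near-minimizers exist --- that existence is essentially the content of the theorem being proved. The paper circumvents this circularity by setting $\lambda:=\liminf_i h(F_i)/|F_i|$ along the given F\o lner net and extracting the tiles $K_j$ from the net itself, so they are simultaneously almost invariant and have ratio $\le\lambda+\varepsilon$; independence of the net is then obtained afterwards by a separate interleaving argument, not by identifying $\lambda$ with an infimum over all finite sets.

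A smaller but relevant point: the statement you are asked to prove is the corollary, whose hypotheses do not include (H3), yet you invoke (H3) and the constant $M$ without justification. The paper's entire proof of the corollary is the observation that in a monoid (H2) forces (H3), since $h(\{s\})=h(\{1_S\}s)\le h(\{1_S\})$, so one may take $M=h(\{1_S\})$ and apply Theorem \ref{theOWGromov}. If your aim is to prove the corollary alone, that reduction is all that is needed; if your aim is to reprove the theorem, the quasi-tiling step must be replaced by the $\varepsilon$-disjoint, multi-tile filling argument with tiles drawn from the F\o lner net, as above.
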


As far as we know, the above result is new even in the case when $S$ is the additive monoid $\N$ of non-negative integers.
However, when $S$ is an amenable group, it was previously established by E.~Lindenstrauss and B.~Weiss \cite[Theorem 6.1]{lindenstrauss-weiss} under the additional assumption that $h$ is 
non-decreasing ($h(A) \leq h(B)$ for all $A,B \in \PP_{fin}(S)$ such that $A \subset B$),
and by M.~Gromov \cite[Section 1.3.1]{gromov}.
\par
Let us note that, in the case when $S$ is a group, condition (H2) implies that $h$ is right-invariant since we then have
$h(A) = h(Ass^{-1}) \leq h(As)$ and hence $h(As) = h(A)$ for all $s \in S$ and $A \in \PP_{fin}(S)$.
This is no more true in general for semigroups.
For example, if $S$ is the additive monoid $\N$,  then the map
$h \colon \PP_{fin}(\N) \to \R$ defined by $h(A) = (1 + \max(A))^{-1} \vert A \vert$ clearly satisfies
(H1) and (H2) but $h(A + s) < h(A)$ for $A = \{0\}$ and $s = 1$.
\par
The proof of Lindenstrauss and Weiss is based on the Ornstein-Weiss machinery of quasi-tiles
(cf.   \cite[Section I.2: Theorem 6]{ornstein-weiss}).
This is the reason why the group version of Theorem \ref{theOWGromov}
is sometimes called the \emph{Ornstein-Weiss lemma} although it does not appear explicitly in \cite{ornstein-weiss}.
A detailed exposition of Gromov's proof of the Ornstein-Weiss lemma may be found in  
\cite{krieger}.
   \par
In the theory of dynamical systems, Theorem \ref{theOWGromov} is important in defining numerical invariants  such as topological entropy, measure-theoretic entropy,
and mean topological dimension. 
These invariants are obtained by taking limits  of quantities defined from a left-F\o lner net 
and one can deduce  from Theorem \ref{theOWGromov} that the choice of the left-F\o lner net is actually irrelevant for actions of cancellative left-amenable semigroups.
\par
Our proof of Theorem \ref{theOWGromov} is entirely self-contained.
 At several points, it is  inspired by some of the ideas developed by Gromov in 
 \cite[Section 1.3.1]{gromov}. However, here again, the passage from groups to semigroups inevitably imposes significant modifications in the arguments.
\par
 We do not know to what extend Theorem \ref{theOWGromov} remains valid for non-cancellative left-amenable semigroups.
\par
 The paper is organized as follows.
In  Section \ref{sec:boundaries-semigroups}, we establish some general properties of boundary sets  in semigroups that are needed for the proof of our main result.
We give in particular a characterization of F\o lner nets  for cancellative semigroups
in terms of relative amenability.
In Section \ref{sec:filling}, we introduce a notion of $\varepsilon$-filling pattern for finite subsets of semigroups and prove a theorem about the existence of certain fillings  by finite systems of tiles with small relative amenability (Theorem \ref{th:quasi-tile}).
This filling theorem is
 a key tool in the proof of Theorem \ref{theOWGromov} given in Section \ref{sec:proof-main-result}. 
 In Section \ref{sec:applications}, which is mostly expository,
 we discuss  the above-mentioned applications of Theorem \ref{theOWGromov} to the definition of numerical invariants of dynamical systems. 

  \section{Boundaries and relative amenability}
\label{sec:boundaries-semigroups}

Let $S$ be a semigroup. Let $K$  and $A$ be subsets of $S$.
\par  
The \emph{right $K$-interior} of $A$ is the set 
$$
\inte_K(A) :=\{s\in A : Ks\subset A\}
$$
consisting of all the elements $s$ in $A$ such that
the right-translate of $K$ by $s$  is entirely contained in $A$. 
\par
 The \emph{right $K$-boundary} of $A$ is the set $\partial_K(A) \subset A$ defined by 
$$
\partial_K(A):=A \setminus \inte_K(A).
$$
Thus,   an element $s \in S$ is in  $\partial_K(A)$ if and only if $s$ is in $A$ and $Ks$ 
meets the complement of $A$ in $S$.

  \begin{proposition}
  \label{prodeffrontiere}
Let $S$ be a semigroup. Let $K$, $A$, and $B$ be subsets of $S$. 
Then one has
\begin{enumerate}[\rm(i)]
\item
$\partial_{K}(A) = A\cap \left(\bigcup_{k\in K}{L_k}^{-1}(kA \setminus A) \right)$;
\item
if every element in $K$ is left-cancellable then
$\partial_{K}(A) =   \bigcup_{k\in K}{L_k}^{-1}(kA \setminus A)$;
 \item 
$\partial_K(A \cup B) \subset \partial_K(A) \cup \partial_K(B)$;
\item 
$\partial_K(B\setminus A) \subset 
\left(\partial_K(B) \cup\left(\bigcup_{k\in K}L_{k}^{-1}(A\cap kS)\right)\right) \setminus A$;
\item 
if $s$ is a right-cancellable element of $S$ then 
\begin{equation*}
\label{q:R-s-1}
\left(\inte_{K}(A)\right)s = \inte_{K}(As); 
\end{equation*}
\item
if $s$ is a right-cancellable element of $S$ then 
 \begin{equation*}
\label{q:R-s-2}
\left(\partial_{K}(A)\right)s =  \partial_{K}( As). 
\end{equation*}
\end{enumerate}
\end{proposition}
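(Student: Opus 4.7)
The plan is to prove each of the six assertions by directly unfolding the definitions of $\inte_K$ and $\partial_K$; the statements are essentially bookkeeping about the condition ``$s \in A$ and $\exists k \in K$ with $ks \notin A$'', and the only non-formal ingredients are left- and right-cancellability, which enter in items (ii), (v), and (vi).

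For (i), I would characterize $s \in \partial_K(A)$ as $s \in A$ together with the existence of some $k \in K$ with $ks \notin A$, then observe that for $s \in A$ the membership $ks \in kA$ is automatic, so that ``$ks \notin A$'' is equivalent to $s \in L_k^{-1}(kA \setminus A)$. Item (ii) is an immediate refinement: when each $k \in K$ is left-cancellable, $ks \in kA$ already forces $s \in A$ (write $ks = ka$ and cancel), so the intersection with $A$ in (i) becomes redundant. Item (iii) is a straightforward case split on whether $s \in A$ or $s \in B$ once $s \in \partial_K(A \cup B)$ is unfolded.

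For (iv), I would take $s \in \partial_K(B \setminus A)$, noting first that $s \notin A$, and pick $k \in K$ with $ks \notin B \setminus A$; this splits into two cases, $ks \notin B$ (giving $s \in \partial_K(B)$) or $ks \in A$ (and since $ks \in kS$, this gives $s \in L_k^{-1}(A \cap kS)$), yielding the claimed inclusion after removing $A$. Items (v) and (vi) use the right-cancellable $s$ to translate everything by $s$ on the right: for (v), the $\subset$ direction is immediate since $Kt \subset A$ implies $Kts \subset As$, and for $\supset$ I would write $u = ts$ and observe that $kts = a_k s$ with $a_k \in A$ forces $kt = a_k$ by right-cancellation, so $Kt \subset A$. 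For (vi), the key small lemma is that right-multiplication by a right-cancellable $s$ commutes with set difference, i.e.\ $(A \setminus C)s = As \setminus Cs$ whenever $C \subset A$, which follows because $(A \setminus C)s$ and $Cs$ are then disjoint; applying this with $C = \inte_K(A)$ and invoking (v) gives the conclusion.

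The proof is almost entirely routine; the only place demanding mild care is (iv), where one must resist the temptation to subtract $A$ too early and must correctly identify the ``extra'' term $L_k^{-1}(A \cap kS)$ arising from the case $ks \in A$. Everywhere else the role of left- or right-cancellability is to pass between statements about $ks$ and statements about $s$ (for (ii)) or to invert right-translation by $s$ (for (v) and (vi)); no F\o lner or amenability input is needed at this stage.
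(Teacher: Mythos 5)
Your proposal is correct and follows essentially the same route as the paper: each item is proved by unfolding the definitions of $\inte_K$ and $\partial_K$, with left-cancellability giving $L_k^{-1}(kA\setminus A)\subset A$ in (ii), right-cancellability used to invert right-translation in (v), and (vi) deduced from (v) together with the fact that $R_s$-injectivity makes right-translation commute with set difference. No gaps; the case analyses in (iii) and (iv) match the paper's argument.
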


\begin{proof}
(i) This is clear since $s \in \partial_{K}(A)$ if and only if $s \in A$ and $ks \notin A$ for some $k \in K$.
\par
(ii) This immediately follows from (i) since the injectivity of $L_k$ implies that  ${L_k}^{-1}(kA \setminus A) \subset A$. 
\par
(iii) Let $s\in  \partial_K(A \cup B)$. This means that $s\in A \cup B$ and 
$$
Ks \cap (S \setminus (A\cup B))\neq \varnothing.  
$$ 
Since $S \setminus (A\cup B)=(S\setminus A) \cap (S\setminus B)$, we deduce that 
$s\in \partial_K(A) \cup \partial_K(B)$. 
\par
(iv) Suppose that  $s\in \partial_K(B\setminus A)$. This means that  $s\in B\setminus A$ and 
$$
Ks \cap (S \setminus (B \setminus A)) \neq \varnothing.
$$
Since $S \setminus (B\setminus A)= (S \setminus B) \cup  A$, we deduce that if 
$s\notin \partial_K(B)$, then 
$Ks \cap A \neq \varnothing$ and hence  $s\in \bigcup_{k\in K} L_{k}^{-1}(A \cap kS)$. 
As $s \notin A$, inclusion (iv) immediately  follows.
\par
 (v) 
Suppose that $s \in S$ is right-cancellable and let $g\in \left(\inte_{K}(A)\right)s$. 
This means that there exists $a\in \inte_K(A)$ such that $g=as$. Hence $g\in As$ and $Kg= K(as)=(Ka)s \subset As $ since $a \in \inte_K(A)$. Thus $g\in \inte_K(As)$. This gives the inclusion 
$\left(\inte_{K}(A)\right)s\subset \inte_K(As) $.
\par
Conversely,  suppose now that  $g\in \inte_K(As)$. Then $g\in As$ and $Kg \subset As$. Thus, there exists $a\in A$ such that $g=as$ and $(Ka)s=K(as) \subset As$. Remark that the inclusion $(Ka)s \subset As$ is equivalent to the inclusion $Ka\subset A$ by injectivity of $R_s$. This proves that $a\in \inte_K(A)$ so that $g\in \left(\inte_{K}(A)\right)s$. Hence $\inte_K(As)  \subset \left(\inte_{K}(A)\right)s$. This completes the proof of (v).
\par	
(vi)
If $s \in S$ is right-cancellable, we have
\begin{align*}
\left(\partial_{K}(A)\right)s  &= \left(A\setminus \inte_K(A)\right) s  \\
&= As \setminus \left(\inte_{K}(A)\right)s  && \text{(since $R_s$ is injective)} \\
&= As \setminus \inte_K(As) && \text{(by (v))} \\
&= \partial_{K}(As).
\end{align*}
 This shows (vi).
\end{proof}

 \begin{lemma}\label{boundary-properties}
Let $S$ be a  semigroup. 
Suppose that $K$ and $A$ are finite subsets of $S$ and that every element of $K$ is left-cancellable.
Then one has 
\begin{equation}
\label{in:boundary}
|\partial_{K}(A)|\leq \sum_{k \in K} \vert kA \setminus A \vert
\end{equation} 
and
\begin{equation}
\label{in:boundary-2}
|kA \setminus A| \le |\partial_{K}(A)| \quad \text{ for all } k\in K.
\end{equation} 
\end{lemma}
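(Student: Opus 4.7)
The plan is to derive both inequalities directly from part (ii) of Proposition \ref{prodeffrontiere}, which gives the identity
\[
\partial_{K}(A) \;=\; \bigcup_{k \in K} L_k^{-1}(kA \setminus A)
\]
whenever every element of $K$ is left-cancellable. The entire argument rests on exploiting the injectivity of each $L_k$ to convert preimage counts into set counts.

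First I would observe that if $k$ is left-cancellable, then $L_k \colon S \to S$ is injective, so for any set $B \subset S$ we have $|L_k^{-1}(B)| = |B \cap L_k(S)| = |B \cap kS|$. Applied to $B = kA \setminus A$, this gives
\[
|L_k^{-1}(kA \setminus A)| \;=\; |kA \setminus A|,
\]
since $kA \setminus A \subset kA \subset kS$.

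For inequality \eqref{in:boundary}, I would simply apply the union bound to the identity from Proposition \ref{prodeffrontiere}(ii), obtaining
\[
|\partial_K(A)| \;\leq\; \sum_{k \in K} |L_k^{-1}(kA \setminus A)| \;=\; \sum_{k \in K} |kA \setminus A|,
\]
where the last equality is the observation above.

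For inequality \eqref{in:boundary-2}, I would note that for each fixed $k \in K$ the subset $L_k^{-1}(kA \setminus A)$ sits inside the union $\partial_K(A)$, hence
\[
|kA \setminus A| \;=\; |L_k^{-1}(kA \setminus A)| \;\leq\; |\partial_K(A)|.
\]
There is no serious obstacle here: once Proposition \ref{prodeffrontiere}(ii) is in hand, the lemma is a direct consequence of the injectivity of $L_k$ and the union bound. The only mildly delicate point, which I would flag explicitly in the write-up, is the equality $|L_k^{-1}(kA \setminus A)| = |kA \setminus A|$, which requires that every element of $kA \setminus A$ actually lies in the image of $L_k$; this is automatic because $kA \setminus A \subset kA = L_k(A)$.
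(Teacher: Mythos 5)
Your proposal is correct and follows essentially the same route as the paper: both derive the lemma from the identity $\partial_K(A)=\bigcup_{k\in K}L_k^{-1}(kA\setminus A)$ of Proposition \ref{prodeffrontiere}(ii), using the union bound for \eqref{in:boundary}, the inclusion $L_k^{-1}(kA\setminus A)\subset\partial_K(A)$ for \eqref{in:boundary-2}, and injectivity of $L_k$ to identify $|L_k^{-1}(kA\setminus A)|$ with $|kA\setminus A|$. Your explicit justification that $kA\setminus A\subset kA=L_k(A)$ lies in the image of $L_k$ is a welcome (if minor) extra precision over the paper's phrasing.
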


\begin{proof}
It follows from  Proposition \ref{prodeffrontiere}.(ii) that
\begin{equation}
\label{e:formula-for-boundary}
\partial_{K}(A) =   \bigcup_{k\in K}{L_k}^{-1}(kA \setminus A).
\end{equation}
This implies
$$
\vert \partial_{K}(A) \vert = \vert \bigcup_{k\in K}{L_k}^{-1}(kA \setminus A) \vert \leq
\sum_{k\in K} \vert {L_k}^{-1}(kA \setminus A) \vert.
$$
As $\vert {L_k}^{-1}(kA \setminus A) \vert = \vert  kA \setminus A \vert$ for all $k \in K$ by
injectivity of $L_k$, this gives us \eqref{in:boundary}.
\par
On the other hand, given $k \in K$,
we deduce from \eqref{e:formula-for-boundary} that
$$
{L_k}^{-1}(kA \setminus A)\subset  \partial_{K}(A).
$$
This implies
$$
\vert kA \setminus A \vert = \vert{L_k}^{-1}(kA \setminus A) \vert \leq \vert \partial_{K}(A) \vert
$$
which yields \eqref{in:boundary-2}.
\end{proof}

Let  $A$  and $K$ be subsets of $S$ with  $A$ finite and non-empty. 
Then $\partial_K(A)$ is also finite since $\partial_K(A) \subset A$.  
We define the \emph{amenability constant} 
of $A$ with respect to $K$ by
$$
\alpha(A,K) := \frac{\vert \partial_K(A)\vert}{\vert A\vert}.
$$
Note that $\alpha(A,K)$ is rational and that one has $0 \leq \alpha(A,K) \leq 1$.
\par
For left-cancellative semigroups, left-amenability is equivalent to the existence of  finite subsets with arbitrary small relative amenability.
More precisely, we have the following result.

\begin{proposition}
\label{p:amenability-boundary}
Let $S$ be a left-cancellative semigroup. 
Then the following conditions are equivalent:
\begin{enumerate}[\rm (a)]
\item 
$S$ is left-amenable;
\item  
for every finite subset $K$ of $S$ and every real number  $\varepsilon>0$, 
there exists a non-empty finite subset 
$F$ of $S$ such that  $\alpha(F,K)\leq \varepsilon$.
 \end{enumerate}
 \end{proposition}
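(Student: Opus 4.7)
The plan is to deduce this equivalence by combining Lemma \ref{boundary-properties} with the F\o lner condition (FC), using the fact, recalled just before the statement of Theorem \ref{theOWGromov}, that for a left-cancellative semigroup $S$, left-amenability is equivalent to (FC) (Namioka \cite[Corollary 4.3]{namioka}), and that (FC) is in turn equivalent to the existence of a left-F\o lner net. Since $S$ is left-cancellative, every element of $S$ is left-cancellable, so Lemma \ref{boundary-properties} applies to every finite subset $K \subset S$. The strategy is thus to convert the quantitative estimate of (FC), stated in terms of the sizes $|kF \setminus F|$, into the estimate $\alpha(F,K) \leq \varepsilon$ on the relative boundary, and conversely.

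For the implication (a)$\Rightarrow$(b), I would proceed as follows. Let $K \subset S$ be finite and $\varepsilon > 0$. If $K = \varnothing$, then $\inte_K(A) = A$ and $\partial_K(A) = \varnothing$ for any non-empty finite $A$, so $\alpha(A,K) = 0$ and any such $A$ works. Otherwise, applying (FC) (available since $S$ is left-amenable and left-cancellative) to $K$ and to $\varepsilon' := \varepsilon/|K| > 0$ produces a non-empty finite $F \subset S$ satisfying $|kF \setminus F| \leq \varepsilon' |F|$ for every $k \in K$. Summing over $k \in K$ and invoking inequality \eqref{in:boundary} of Lemma \ref{boundary-properties}, one obtains
\[
|\partial_K(F)| \leq \sum_{k \in K} |kF \setminus F| \leq |K| \cdot \varepsilon' |F| = \varepsilon |F|,
\]
which is exactly $\alpha(F,K) \leq \varepsilon$.

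For the converse implication (b)$\Rightarrow$(a), it suffices, again by Namioka's result, to verify that $S$ satisfies (FC). Fix a finite $K \subset S$ and $\varepsilon > 0$; the case $K = \varnothing$ is trivial (take any non-empty finite $F$), so assume $K \neq \varnothing$. By assumption (b), there exists a non-empty finite $F \subset S$ with $|\partial_K(F)| \leq \varepsilon |F|$. Inequality \eqref{in:boundary-2} of Lemma \ref{boundary-properties} then yields, for every $k \in K$,
\[
|kF \setminus F| \leq |\partial_K(F)| \leq \varepsilon |F|,
\]
which is precisely the F\o lner estimate \eqref{e:cond-folner-2-ordre-reverser}.

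There is no real obstacle here beyond being careful about the degenerate case $K = \varnothing$ and about the factor $|K|$ that one must absorb when passing from the coarse bound $\sum_k |kF \setminus F|$ down to $\varepsilon|F|$; both are handled by choosing the F\o lner parameter to be $\varepsilon/|K|$ rather than $\varepsilon$. The symmetric shape of the two inequalities \eqref{in:boundary} and \eqref{in:boundary-2}, which sandwich $|\partial_K(F)|$ between $\max_k|kF\setminus F|$ and $\sum_k|kF\setminus F|$, is what makes the two notions of smallness interchangeable and drives the equivalence.
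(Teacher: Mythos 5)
Your proof is correct and follows essentially the same route as the paper: both directions are obtained by sandwiching $|\partial_K(F)|$ between $\max_{k\in K}|kF\setminus F|$ and $\sum_{k\in K}|kF\setminus F|$ via inequalities \eqref{in:boundary} and \eqref{in:boundary-2} of Lemma \ref{boundary-properties}, and then invoking Namioka's result that left-amenability is equivalent to (FC) for left-cancellative semigroups. The only cosmetic difference is that you rescale $\varepsilon$ to $\varepsilon/|K|$ (and treat $K=\varnothing$ separately) where the paper simply records the bound $\alpha(F,K)\leq |K|\varepsilon$; the content is identical.
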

 
 \begin{proof}
 Let $F$ and $K$ be finite subsets of $S$ with $F \not= \varnothing$.
From inequality \eqref{in:boundary} of
  Lemma \ref{boundary-properties}, we deduce that if 
  $|kF \setminus F| \leq \varepsilon \vert F \vert$  for all $k \in K$, then
  $\alpha(F,K) \leq  \vert K \vert \varepsilon$.
  Conversely, inequality \eqref{in:boundary-2}  implies that if
  $\alpha(F,K) \leq \varepsilon$ then
  $\vert kF \setminus F \vert \leq \varepsilon \vert F \vert $ for all $k \in K$.
  \par
We deduce that the F\o lner condition (FC) is equivalent to condition (b) of the statement.
On the other hand, as $S$ is left-cancellative, we know from the result mentioned in the Introduction that $S$ is left-amenable if and only if it satisfies (FC).
This shows the equivalence between conditions (a) and (b).
  \end{proof}

Similarly, we have the following characterization of F\o lner nets in left-cancellative and left-amenable semigroups. 

 \begin{proposition}
 \label{p:Folner-boundaries}
Let $S$ be a left-cancellative and left-amenable semigroup.
Let $(F_i)_{i \in I}$ be a directed net of non-empty finite subsets of $S$.
Then the following conditions are equivalent:
 \begin{enumerate}[\rm (a)]
 \item
 $(F_i)_{i \in I}$ is a left-F\o lner net for $S$;
 \item
 for each finite subset $K$ of $S$, one has $\lim_i \alpha(F_i,K) = 0$.
 \end{enumerate}  
\end{proposition}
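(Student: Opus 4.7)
The plan is to show both implications directly from the two inequalities of Lemma \ref{boundary-properties}, using the fact that left-cancellativity of $S$ guarantees that every element of every finite subset $K\subset S$ is left-cancellable, so Lemma \ref{boundary-properties} is available without any further hypothesis.

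For the implication (a) $\Rightarrow$ (b), fix a finite subset $K\subset S$. Applying inequality \eqref{in:boundary} with $A=F_i$, I get
\[
\alpha(F_i,K) \;=\; \frac{|\partial_K(F_i)|}{|F_i|} \;\leq\; \sum_{k\in K}\frac{|kF_i\setminus F_i|}{|F_i|}.
\]
Since $(F_i)_{i\in I}$ is a left-F\o lner net, each of the finitely many summands $|kF_i\setminus F_i|/|F_i|$ tends to $0$, hence so does the finite sum, and therefore $\alpha(F_i,K)\to 0$.

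For the converse (b) $\Rightarrow$ (a), fix an arbitrary element $s\in S$ and apply inequality \eqref{in:boundary-2} with $K=\{s\}$ and $A=F_i$, which gives
\[
|sF_i\setminus F_i| \;\leq\; |\partial_{\{s\}}(F_i)|,
\]
so that $|sF_i\setminus F_i|/|F_i| \leq \alpha(F_i,\{s\})$. By hypothesis (b) applied to the finite set $K=\{s\}$, the right-hand side tends to $0$, and hence $(F_i)_{i\in I}$ satisfies the left-F\o lner condition \eqref{e:Folner-net-condition}.

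There is no real obstacle here: the argument is essentially the same as the one already carried out in the proof of Proposition \ref{p:amenability-boundary}, simply replacing the global quantifier ``there exists $F$'' by the asymptotic ``$\lim_i$''. The only point that has to be noted explicitly is that $|K|$, being finite, is a constant independent of $i$, so summing finitely many null sequences yields a null sequence; the left-amenability hypothesis on $S$ is not used in either direction of this equivalence and is included only to make sure that left-F\o lner nets exist at all, so that the statement is non-vacuous.
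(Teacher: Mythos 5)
Your proposal is correct and follows essentially the same route as the paper's own proof: inequality \eqref{in:boundary} for (a) $\Rightarrow$ (b) and inequality \eqref{in:boundary-2} with $K=\{s\}$ for (b) $\Rightarrow$ (a), with directedness of $I$ implicitly handling the finitely many limits (the paper just spells this out by choosing an index $j$ dominating the $i_k$). Your remark that left-amenability is not actually used in either implication is also accurate.
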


\begin{proof}
 Let $s \in S$ and take $K = \{s\}$.
Then one has $|sF_i \setminus F_i|/|F_i| \leq \alpha(F_i,K)$ for all $i \in I$ by 
\eqref{in:boundary-2}.
This shows that (b) implies (a).
\par
Conversely, suppose that $(F_i)_{i \in I}$ is a left-F\o lner net for $S$. 
Let $K$ be a finite subset of $S$
and $\varepsilon > 0$.
Then  there exists $i_k \in I$ such that
$|kF_i \setminus F_i|/|F_i| \leq \varepsilon$ for all $i \geq i_k$.
If $j \in I$ is such that $j \geq i_k$ for all $k \in K$, we deduce 
that $\alpha(F_i,K) \leq \varepsilon |K|$ for all $i \geq j$ by using \eqref{in:boundary}. This shows that (a) implies (b).
    \end{proof}

\section{Fillings}
\label{sec:filling}

 The goal of this section is to establish Theorem \ref{th:quasi-tile}
which is a key tool in the proof of Theorem \ref{theOWGromov} that will be given in the next section.

\begin{definition}
Let $X$ be a set and $\varepsilon>0$ a real number. A family
$(A_j)_{j\in J}$ of finite subsets of $X$ is said to be
\emph{$\varepsilon$-disjoint} if there exists a family $(B_j)_{j \in J}$ of pairwise disjoint subsets of $X$ such that $B_j \subset A_j$ and
$|B_j|\ge (1-\varepsilon)|A_j|$ for all $j\in J$.
\end{definition}

\begin{lemma}
\label{lemepsilondisjoint}
Let $X$ be a set and $(A_j)_{j\in J}$ a finite
$\varepsilon$-disjoint family of finite subsets of $X$. Then one has
$$
(1-\varepsilon)\sum_{j\in J} |A_j| \le 
\left|\bigcup_{j\in J} A_j\right|.$$
\end{lemma}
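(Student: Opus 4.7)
The plan is to unpack the definition of $\varepsilon$-disjointness and use the fact that the cardinality of a disjoint union equals the sum of cardinalities. By hypothesis, there exists a family $(B_j)_{j \in J}$ of pairwise disjoint subsets of $X$ with $B_j \subset A_j$ and $|B_j| \ge (1-\varepsilon)|A_j|$ for every $j \in J$.

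First I would note that since the $B_j$ are pairwise disjoint, we have
\[
\sum_{j \in J} |B_j| = \Bigl|\bigcup_{j \in J} B_j\Bigr|.
\]
Next I would use the inclusion $B_j \subset A_j$ for each $j \in J$, which yields $\bigcup_{j \in J} B_j \subset \bigcup_{j \in J} A_j$, and in particular
\[
\Bigl|\bigcup_{j \in J} B_j\Bigr| \le \Bigl|\bigcup_{j \in J} A_j\Bigr|.
\]
Finally, summing the inequality $|B_j| \ge (1-\varepsilon)|A_j|$ over $j \in J$ gives $(1-\varepsilon)\sum_{j \in J} |A_j| \le \sum_{j \in J} |B_j|$, and chaining the three estimates produces the claimed inequality.

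There is no real obstacle here: the finiteness of $J$ ensures that the sums in question are well-defined and that the union of the $B_j$ is a finite subset of $X$, so the elementary cardinality arithmetic used above is valid without further justification.
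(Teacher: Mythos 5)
Your proof is correct and follows essentially the same route as the paper: extract the pairwise disjoint family $(B_j)_{j\in J}$ from the definition, sum the inequalities $|B_j|\ge(1-\varepsilon)|A_j|$, and use $\sum_j|B_j|=\bigl|\bigcup_j B_j\bigr|\le\bigl|\bigcup_j A_j\bigr|$. Nothing is missing.
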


\begin{proof}
Since $(A_j)_{j\in J}$ is $\varepsilon$-disjoint, there exists
a  family $(B_j)_{j\in J}$ of pairwise disjoint subsets of $X$ such that $B_j\subset A_j$ and $|B_j| \ge (1-\varepsilon)\ |A_j|$ for all $j\in J$. Thus, we have
$$
(1-\varepsilon)\sum_{j\in J} |A_j| \le \sum_{j\in J} |B_j| = \left| \bigcup_{j\in J}
B_j \right|\le
  \left| \bigcup_{j\in J} A_j \right|.
  $$
 \end{proof}

\begin{lemma} 
\label{lemUnion}
Let $S$ be a semigroup. Let also $K$ be a finite subset of $S$ and $0< \varepsilon<1$.
Suppose that $(A_j)_{j\in J}$ is a finite $\varepsilon$-disjoint family of non-empty finite subsets of $S$. Then one has
$$
\alpha\left(\bigcup_{j\in J} A_j,K\right)\le \frac{1}{1-\varepsilon}\max_{j\in J}\alpha(A_j,K).
$$
\end{lemma}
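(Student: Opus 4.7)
The plan is to bound the boundary of the union by the sum of boundaries, and then use the $\varepsilon$-disjointness hypothesis to control the sum of cardinalities by the cardinality of the union.

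First I would apply Proposition \ref{prodeffrontiere}(iii) inductively on the finite index set $J$ to obtain the inclusion
\[
\partial_K\!\left(\bigcup_{j\in J} A_j\right) \subset \bigcup_{j\in J}\partial_K(A_j),
\]
which immediately yields the cardinality estimate
\[
\left|\partial_K\!\left(\bigcup_{j\in J} A_j\right)\right| \le \sum_{j\in J} |\partial_K(A_j)| = \sum_{j\in J} \alpha(A_j,K)\,|A_j| \le \Big(\max_{j\in J}\alpha(A_j,K)\Big) \sum_{j\in J} |A_j|.
\]

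Next I would invoke Lemma \ref{lemepsilondisjoint}, which, thanks to the $\varepsilon$-disjointness of $(A_j)_{j\in J}$, gives
\[
\sum_{j\in J} |A_j| \le \frac{1}{1-\varepsilon}\left|\bigcup_{j\in J} A_j\right|.
\]
Combining the two inequalities and dividing by $\left|\bigcup_{j\in J} A_j\right|$ (which is nonzero since each $A_j$ is non-empty) produces the claimed bound
\[
\alpha\!\left(\bigcup_{j\in J} A_j,K\right) \le \frac{1}{1-\varepsilon}\max_{j\in J}\alpha(A_j,K).
\]

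I do not anticipate a genuine obstacle here: the argument is a direct chaining of the subadditivity of the boundary operator (a consequence of Proposition \ref{prodeffrontiere}(iii)) with the already-established $\varepsilon$-disjoint counting lemma. The only mildly delicate point is to make sure that the inclusion $\partial_K(A\cup B)\subset \partial_K(A)\cup \partial_K(B)$ really iterates over a finite union, but this is immediate by induction on $|J|$.
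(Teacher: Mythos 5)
Your proposal is correct and follows the paper's own proof essentially verbatim: the same inclusion $\partial_K\bigl(\bigcup_j A_j\bigr)\subset\bigcup_j\partial_K(A_j)$ from Proposition \ref{prodeffrontiere}(iii), the same bound by $\max_j\alpha(A_j,K)\sum_j|A_j|$, and the same use of Lemma \ref{lemepsilondisjoint} to convert the sum into $\frac{1}{1-\varepsilon}\bigl|\bigcup_j A_j\bigr|$. The only addition is your explicit remark that the two-set inclusion iterates over a finite union by induction, which the paper leaves implicit.
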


\begin{proof} 
Let us set $M:=\max_{j\in J}\alpha(A_j,K)$.
It follows from Proposition \ref{prodeffrontiere}.(iii) that
$$
\partial_{K}\left( \bigcup_{j\in J} A_j \right) \subset \bigcup_{j\in J} \partial_{K} (A_j).
$$ 
Thus
\begin{equation*}
\left|\partial_K\left(\bigcup_{j\in J} A_j\right)\right| \le \left|\bigcup_{j\in J}
 \partial_{K} (A_j)\right|
 \le \sum_{j\in J} |\partial_{K}
 (A_j)|=\sum_{j\in J} \alpha(A_j,K)
 |A_j|\le M \sum_{j\in J}
 |A_j| .
 \end{equation*}
 As the family $(A_j)_{j\in J}$ is $\varepsilon$-disjoint, we deduce from 
 Lemma \ref{lemepsilondisjoint} that
 $$
\alpha\left(\bigcup_{j\in J} A_j,K\right)=\frac{\left|\partial_{K}\left(\bigcup_{j\in J} A_j\right)\right|}{\left|\bigcup_{j\in J} A_j\right|} \le \frac{M}{1-\varepsilon}.  
$$
\end{proof}

\begin{lemma}
\label{lemAsetminusB}
Let $S$ be a semigroup. Let $K$, $A$ and $\Omega$ be finite subsets of $S$ such that
every element of $K$ is left-cancellable and  
$\varnothing \neq A \subset \Omega$. 
Suppose that  $\varepsilon > 0$ is a real number such that
$|\Omega \setminus A| \ge \varepsilon |\Omega|$.
Then one has
$$
\alpha(\Omega \setminus A,K) \le \frac{\alpha(\Omega,K)+ |K| \alpha(A,K)}{\varepsilon}.
$$
\end{lemma}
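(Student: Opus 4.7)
The plan is to combine the inclusion of Proposition~\ref{prodeffrontiere}.(iv) (applied with $B := \Omega$) with a pointwise comparison $|L_k^{-1}(A) \setminus A| \leq |\partial_K(A)|$, and then to divide by $|\Omega \setminus A|$ using the hypothesis $|\Omega \setminus A| \geq \varepsilon|\Omega|$.

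Taking $B := \Omega$ in Proposition~\ref{prodeffrontiere}.(iv), and noting that $L_k^{-1}(A \cap kS) = L_k^{-1}(A)$ for every $k \in K$ (since every $ks$ trivially lies in $kS$), one obtains
$$|\partial_K(\Omega \setminus A)| \leq |\partial_K(\Omega)| + \sum_{k \in K} |L_k^{-1}(A) \setminus A|.$$
The heart of the argument would then be the estimate $|L_k^{-1}(A) \setminus A| \leq |\partial_K(A)|$ for each $k \in K$. To establish it, I would observe that if $s \in L_k^{-1}(A) \setminus A$, then $ks$ lies in $A \setminus kA$: indeed $ks \in A$ by definition, and if we had $ks = kt$ for some $t \in A$, then left-cancellability of $k$ would give $s = t$, contradicting $s \notin A$. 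Since $L_k$ is injective, the map $s \mapsto ks$ embeds $L_k^{-1}(A) \setminus A$ into $A \setminus kA$, so $|L_k^{-1}(A) \setminus A| \leq |A \setminus kA|$. Left-cancellability of $k$ also yields $|A| = |kA|$, whence $|A \setminus kA| = |kA \setminus A|$, which is at most $|\partial_K(A)|$ by inequality~\eqref{in:boundary-2} of Lemma~\ref{boundary-properties}.

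Summing over $k \in K$ then gives $|\partial_K(\Omega \setminus A)| \leq |\partial_K(\Omega)| + |K|\,|\partial_K(A)|$. To conclude, I would divide by $|\Omega \setminus A|$ and apply $|\Omega \setminus A| \geq \varepsilon|\Omega|$ to the first summand and $|\Omega \setminus A| \geq \varepsilon|\Omega| \geq \varepsilon|A|$ (the last step using $A \subset \Omega$) to the second, recovering the stated bound. The principal obstacle lies in the inequality $|L_k^{-1}(A) \setminus A| \leq |\partial_K(A)|$: in the group case one could invoke $L_k^{-1} = L_{k^{-1}}$ and the symmetry $|A \setminus kA| = |kA \setminus A|$, but in the semigroup setting one must proceed through the map $s \mapsto ks$, exploiting left-cancellability of $k$ both to land inside $A \setminus kA$ and to obtain the cardinality identity $|A| = |kA|$.
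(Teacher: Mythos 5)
Your proof is correct and follows essentially the same route as the paper: Proposition~\ref{prodeffrontiere}.(iv) with $B=\Omega$, then the key estimate $|L_k^{-1}(A\cap kS)\setminus A|\le |A\setminus kA|=|kA\setminus A|\le|\partial_K(A)|$ via injectivity of $L_k$ and inequality~\eqref{in:boundary-2}, and finally division by $|\Omega\setminus A|\ge\varepsilon|\Omega|$ together with $|A|\le|\Omega|$. The only cosmetic difference is that you phrase the cardinality comparison as an injection $s\mapsto ks$ into $A\setminus kA$ (after simplifying $L_k^{-1}(A\cap kS)=L_k^{-1}(A)$), whereas the paper computes $|k(L_k^{-1}(A\cap kS)\setminus A)|=|A\cap kS\setminus kA|$; these are the same argument.
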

 
\begin{proof}
By Proposition \ref{prodeffrontiere}.(iv), we have that
\begin{equation*}
\partial_{K}(\Omega\setminus A) \subset \left(\partial_{K}(\Omega) \cup \left(\bigcup_{k\in K} L_{k}^{-1}(A\cap kS)\right)\right) \setminus A.
 \end{equation*}
This implies
\begin{equation*}
\partial_{K}(\Omega\setminus A) \subset \partial_{K}(\Omega) \cup \left(\bigcup_{k\in K} L_{k}^{-1}(A\cap kS) \setminus A\right) 
\end{equation*}
and hence
\begin{equation} 
\label{eqn-1AminusB}
|\partial_{K}(\Omega\setminus A)|  
\le   |\partial_{K}(\Omega)| + \sum_{k \in K}|L_{k}^{-1}(A\cap kS) \setminus A|.
\end{equation}
Now, for all $k \in K$, the injectivity of $L_k$ implies that 
$$
|L_{k}^{-1}(A\cap kS) \setminus A|=|k(L_{k}^{-1}(A\cap kS) \setminus A)|=|A\cap kS \setminus kA|\le
|A \setminus kA|=|kA \setminus A|,
$$
where the last equality follows from the fact that $A$ and $kA$ have the same cardinality.
Hence, by using inequality \eqref{in:boundary-2} in Lemma \ref{boundary-properties},   
we get 
\begin{equation}
\label{eqn-1AminusB-2}
|L_{k}^{-1}(A\cap kS) \setminus A|\le \vert \partial_K(A) \vert  
\end{equation}
for all $k \in K$.
\par
From \eqref{eqn-1AminusB} and \eqref{eqn-1AminusB-2}, we deduce that
\begin{equation} 
\label{eqn-2AminusB}
|\partial_{K}(\Omega\setminus A )| \le |\partial_{K}(\Omega)| + |K| \vert \partial_K(A) \vert .
\end{equation}
It follows that
\begin{align*}
\alpha(\Omega \setminus A,K) &=\frac{|\partial_{K}(\Omega\setminus A )|}{|\Omega \setminus A|} \\
&\le  \frac{|\partial_{K}(\Omega)| + |K| \vert \partial_K(A) \vert }{|\Omega \setminus A|} 
&& \text{(by \eqref{eqn-2AminusB})} \\
&= \frac{\alpha(\Omega,K)|\Omega| + |K| \alpha(A,K)|A| }{|\Omega \setminus A|} \\
&\le \frac{\alpha(\Omega,K)|\Omega| + |K| \alpha(A,K) |A| }{\varepsilon |\Omega|} 
&& \text{(since $|\Omega \setminus A| \ge \varepsilon |\Omega|$)} \\  
&\le
 \frac{\alpha(\Omega,K)+ |K| \alpha(A,K)}{\varepsilon}
 && \text{(since $|A| \le |\Omega|$).}
 \end{align*}
\end{proof}

\begin{lemma}
\label{lemcardinalAB}
 Let $S$ be a   semigroup.
Let $A$ and $B$ be finite subsets of $S$.
 Suppose that every element of $A$ is left-cancellable. 
 Then one has
$$
\sum_{s\in S} |As\cap B|\le |A| |B|.
$$
 \end{lemma}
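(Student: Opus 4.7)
The plan is a double-counting argument: I would reinterpret the left-hand side as counting a set of pairs in $A \times S$, swap the order of summation, and then invoke left-cancellability at the final step to bound the number of preimages in $S$.

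First, I would observe that for each fixed $s \in S$, the set $As \cap B$ is exactly the image of the subset $\{a \in A : as \in B\}$ under the map $a \mapsto as$. Since a surjection between finite sets never increases cardinality, this yields the pointwise inequality
\[
|As \cap B| \leq |\{a \in A : as \in B\}|.
\]
This step requires no cancellability hypothesis; any loss in the lemma is contained here, with equality precisely when $a \mapsto as$ is injective on the indicated subset of $A$.

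Next, I would sum over $s \in S$ and exchange the order of summation:
\[
\sum_{s \in S} |As \cap B| \;\leq\; \sum_{s \in S} |\{a \in A : as \in B\}| \;=\; \sum_{a \in A} |\{s \in S : as \in B\}| \;=\; \sum_{a \in A} |L_a^{-1}(B)|.
\]
This is where left-cancellability enters: by hypothesis, $L_a$ is injective for each $a \in A$, so it restricts to an injection from $L_a^{-1}(B)$ into $B$, giving $|L_a^{-1}(B)| \leq |B|$. Summing over the $|A|$ elements of $A$ then produces the desired bound $|A| \cdot |B|$.

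I do not anticipate a real obstacle. The only point worth a sentence is that the sum over the possibly infinite index set $S$ is well-defined; this is automatic, since a nonzero summand forces some $a \in A$ with $as \in B$, and for each such $a$ there are at most $|B|$ admissible $s$ by left-cancellability, so only finitely many terms contribute.
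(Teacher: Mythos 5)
Your proof is correct, and it is at bottom the same double-counting argument as the paper's, just sliced the other way. The paper expands $\sum_{s\in S}|As\cap B|$ via characteristic functions, fixes $s'\in B$, and uses injectivity of $L_a$ to see that for each $a\in A$ there is at most one $s$ with $as=s'$, so each $s'\in B$ contributes at most $|A|$; you instead bound $|As\cap B|$ by $|\{a\in A: as\in B\}|$ (no hypothesis needed there), swap the sums, fix $a\in A$, and use injectivity of $L_a$ to get $|L_a^{-1}(B)|\le |B|$. Both routes count the same incidence set $\{(a,s)\in A\times S : as\in B\}$ and invoke left-cancellability of the elements of $A$ exactly once, on one family of fibers, so neither buys anything substantive over the other; your closing remark that only finitely many $s$ contribute is also correct and settles the well-definedness of the sum, which the paper leaves implicit.
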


 \begin{proof}
For $E \subset S$, denote by
 $\chi_E \colon S \to \R$  the characteristic map of $E$, i.e., the map defined by $\chi_E(s) = 1$ if $s \in E$ and $\chi_E(s) = 0$ otherwise.
We have
\begin{equation}
\label{eq:AsB} 
\begin{split}  \sum_{s\in S} |As \cap B| &= \sum_{s\in S}
 \sum_{s' \in
 S}\chi_{As\cap B}(s')\\
&=\sum_{s \in S}
 \sum_{s' \in
 S}\chi_{As}(s')\chi_B(s')\\
&= \sum_{s' \in S}
 \sum_{s \in
 S}\chi_{As}(s')\chi_B(s') \\
&= \sum_{s' \in S} \chi_B(s') \left(\sum_{s \in S}\chi_{As}(s')\right). 
  \end{split}
\end{equation}
If we fix $a \in A$ and $s' \in S$,
the injectivity of  $L_a$ implies that   there exists at most one element $s \in S$ such that $as = s'$.
It follows that, given $s' \in S$,  there are at most $\vert A \vert$ elements $s \in S$ such that $s' \in As$. In other words, we have
$$\sum_{s \in S}\chi_{As}(s')\le  |A|$$
for all $s'\in S$.
Thus, we deduce from   \eqref{eq:AsB} that 
 $$
 \sum_{s\in S} |As \cap B| \le |A|\sum_{s' \in S}
\chi_ B(s')=|A||B|.  
$$
\end{proof}

\begin{remarks}
1) The argument used in the preceding proof shows that the inequality in  
Lemma \ref{lemcardinalAB}
can be replaced by an equality if $L_a$ is bijective for every $a \in A$
(e.g., if $S$ is a group).
In fact, when $S$ is a group, the equality
$\sum_{s\in S} |As\cap B|= |A| |B|$ 
is obtained by taking $f = \chi_A$ and $g = \chi_B$ in the  formula  
$\Vert f \ast g \Vert_1 = \Vert f \Vert_1 \Vert g \Vert_1$, valid for all $f,g \geq 0$ in the convolution Banach algebra $\ell^1(S)$. 
 \par
2) The inequality in Lemma \ref{lemcardinalAB} may be strict.
Consider for example the additive monoid $\N$ of non-negative integers and two non-empty finite subsets $A,B \subset \N$ with $\max B < \min A$.
Then one has
$\sum_{s \in \N} \vert (A + s) \cap B \vert = 0$ but $\vert A \vert \vert B \vert \geq 1$.
Note that the additive monoid $\N$ is commutative (and hence amenable) and cancellative.
\par
3) Lemma \ref{lemcardinalAB} becomes false if we drop the hypothesis that every element of $A$  
is left-cancellable.
Indeed, consider the monoid $S = \{s_0,s_1\}$, where $s_0$ is an identity element and $s_1 \not=
s_0$ satisfies
$s_1^2 = s_1$. 
Then, by taking  $A = B = \{s_1\}$, we have
$\sum_{s \in S} |As\cap B| = 2$ but $|A| |B| = 1$.  
\end{remarks}

\begin{definition}
Let $S$ be a semigroup. Let $K$ and $\Omega$ be finite subsets of $S$.
Given a real number  $\varepsilon >0$, 
a finite subset $P\subset S$ is called an
\emph{$(\varepsilon, K)$-filling pattern} for $\Omega$ if the following conditions are satisfied:
\begin{enumerate} [(F1)]
\item 
$P\subset \inte_K(\Omega)$;
\item 
the family $(Ks)_{s\in P}$ is $\varepsilon$-disjoint.
\end{enumerate}
\end{definition}

\par
The following lemma will be used in
the proof of  Theorem \ref{th:quasi-tile}
(compare with  $(+)_{\varepsilon}$ in Section 1.3.1 of \cite{gromov} in the group case).
It can be viewed as a kind of analogue of Euclidean division for integers.

\begin{lemma}[Filling lemma]
\label{lempreliminaire}
Let $S$ be a cancellative semigroup. 
Let $\Omega$ and $K$ be non-empty finite subsets of $S$. 
Then, for every $\varepsilon \in (0,1]$, there exists an 
$(\varepsilon,K)$-filling pattern $P$ for $\Omega$ such that
 \begin{equation}
 \label{e;cond-ii-filling}
  |KP| \ge 
 \varepsilon (1-\alpha(\Omega,K))|\Omega|.
 \end{equation} 
\end{lemma}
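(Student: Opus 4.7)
The plan is to build $P$ by a greedy procedure: starting from $P_0 := \varnothing$, at step $n$ I look for some $s \in \inte_K(\Omega) \setminus P_n$ satisfying $|Ks \setminus K P_n| \geq (1-\varepsilon)|Ks|$, and if such an $s$ exists, I set $P_{n+1} := P_n \cup \{s\}$ and iterate. Since $\inte_K(\Omega)$ is finite, the process terminates; denote the final set by $P$. Condition (F1) holds by construction.

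To verify (F2), order the elements $s_1,\dots,s_N$ of $P$ according to the step at which they were added and define the witness sets $B_{s_n} := K s_n \setminus K\{s_1,\dots,s_{n-1}\}$. Then $|B_{s_n}| \geq (1-\varepsilon)|K s_n|$ by the selection rule, and for $m > n$ one has $B_{s_m} \subset S \setminus K s_n$, so in particular $B_{s_m} \cap B_{s_n} = \varnothing$. Hence $(Ks)_{s \in P}$ is $\varepsilon$-disjoint and $P$ is indeed an $(\varepsilon,K)$-filling pattern for $\Omega$.

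To prove \eqref{e;cond-ii-filling}, I would show that $|Ks \cap KP| \geq \varepsilon|K|$ for every $s \in \inte_K(\Omega)$. If $s \in P$ this is immediate since $Ks \subset KP$ and $|Ks| = |K|$ (using that $S$ is right-cancellative). If $s \in \inte_K(\Omega) \setminus P$, termination of the greedy process yields $|Ks \setminus KP| < (1-\varepsilon)|Ks|$, whence $|Ks \cap KP| > \varepsilon|Ks| = \varepsilon|K|$. Summing over $s \in \inte_K(\Omega)$ and invoking Lemma \ref{lemcardinalAB} with $A := K$ and $B := KP$ (applicable because every element of $K$ is left-cancellable), we get
$$
\varepsilon |K|\,|\inte_K(\Omega)| \;\leq\; \sum_{s \in \inte_K(\Omega)} |Ks \cap KP| \;\leq\; \sum_{s \in S} |Ks \cap KP| \;\leq\; |K|\,|KP|,
$$
and dividing by $|K|$ together with $|\inte_K(\Omega)| = (1-\alpha(\Omega,K))|\Omega|$ yields the required bound.

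The main delicacy is the bookkeeping behind (F2): the witness sets $B_s$ must be \emph{pairwise} disjoint, not merely satisfy $|B_s| \geq (1-\varepsilon)|Ks|$ individually, and ordering them according to the greedy step is the clean way to guarantee this. Everything else is a standard greedy/double-counting combination, with Lemma \ref{lemcardinalAB} playing the role of converting the pointwise bound $|Ks \cap KP| \geq \varepsilon |K|$ into a lower bound on $|KP|$ itself. Note that cancellativity of $S$ enters in two distinct places: left-cancellability of elements of $K$ is needed to apply Lemma \ref{lemcardinalAB}, while right-cancellability is needed to ensure $|Ks| = |K|$.
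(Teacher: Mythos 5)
Your proof is correct and is essentially the paper's argument: where the paper selects a filling pattern $P$ of maximal cardinality and derives $|Ks\cap KP|\ge\varepsilon|Ks|$ for $s\in\inte_K(\Omega)$ from the impossibility of adjoining $s$, you build $P$ greedily and get the same inequality from termination, after which the double-counting via Lemma \ref{lemcardinalAB} and the identity $|\inte_K(\Omega)|=(1-\alpha(\Omega,K))|\Omega|$ are identical. Your explicit ordered witness sets $B_{s_n}$ just spell out the $\varepsilon$-disjointness check that the paper leaves implicit, so the two proofs coincide in substance.
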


\begin{proof}
 Let $\PP$ denote the set consisting of all $(\varepsilon,K)$-filling patterns for $\Omega$. 
Observe that $\PP$ is not empty, since $\varnothing \in \PP$, and that every element of $\PP$
has cardinality bounded above by $|\inte_K(\Omega)|$,
since it is contained in $\inte_K(\Omega)$.
 Choose a pattern $P \in \PP$ with maximal cardinality.
 Let us show that \eqref{e;cond-ii-filling} is satisfied.
 To slightly simplify notation, let us set 
 $$
 B :=KP = \bigcup_{s\in P}Ks.
 $$ 
 By applying Lemma \ref{lemcardinalAB}, we get
\begin{eqnarray}
\label{inegalite-1}
\sum_{s\in \INT_K(\Omega)} |Ks \cap B| \le \sum_{s\in S} |Ks \cap B| \le |K||B|.
\end{eqnarray}
Let us prove that
 \begin{equation}
 \label{eqnmaximalite}
 \varepsilon |Ks| \le |Ks \cap B| \quad  \text{for all  } s\in \INT_K(\Omega).
 \end{equation}
If $s \in P$, then $Ks \cap B=Ks$ and
 \eqref{eqnmaximalite} holds true since $\varepsilon \le 1$. 
Let now $s\in
 \INT_K(\Omega) \setminus P$ and suppose, by contradiction, that 
 $|Ks \cap B| < \varepsilon|Ks|$. Then, we have that
 $$
 |Ks \setminus B |=|Ks|-|Ks\cap B|>|Ks|-\varepsilon |Ks|=
 (1-\varepsilon)|Ks|,
 $$
which implies that $P\cup \{s\}$ is an
 $(\varepsilon,K)$-filling pattern for $\Omega$. This contradicts
 the maximality of the cardinality of $P$. This proves \eqref{eqnmaximalite}. 
 \par
Finally, we obtain
\begin{align*}
 \varepsilon|K||\INT_K(\Omega)|
  &= \sum_{s \in \INT_K(\Omega)} \varepsilon |K| \\
  &= \sum_{s \in \INT_K(\Omega)} \varepsilon |Ks| && \text{(since $|K| = |Ks|$ by right-cancellativity of $s$)} \\
 &\le \sum_{s\in \INT_K(\Omega)}|Ks \cap B| && \text{(by \eqref{eqnmaximalite})} \\ 
& \leq |K| |B| && \text{(by \eqref{inegalite-1}),} 
  \end{align*}
which gives us
 $$
 |B|\ge \varepsilon |\inte_K(\Omega)|.       
 $$
 As $|\inte_K(\Omega)| = |\Omega| - |\partial_{K} (\Omega)| = (1-\alpha(\Omega,K))|\Omega|$,
this yields \eqref{e;cond-ii-filling}.
 \end{proof}

 \begin{theorem}[Filling theorem] 
 \label{th:quasi-tile}
Let $S$ be a cancellative  semigroup and 
let $\varepsilon \in (0,\dfrac{1}{2}]$.
Then there exists an integer $n_0= n_0(\varepsilon) \ge 1$ such that for each integer $n\ge n_0$ the following holds. 
\par
If $(K_j)_{1 \leq j \leq n}$ is a finite sequence of non-empty finite subsets of $S$ such that
\begin{equation}
\label{e:conditions-Kj}
\alpha(K_k, K_j)\le \frac{\varepsilon^{2n}}{|K_j|} \quad \text{for all $1\le j<k \le n$},
\end{equation}
and $D$ is a non-empty finite subset of $S$ such that
\begin{equation}
\label{e:condition-D}
\alpha(D,K_j)\le \varepsilon^{2n} \quad \text{for all  } 1 \leq j \leq n,
\end{equation}
then there exists a finite sequence $(P_j)_{1 \leq j \leq n}$ of finite subsets of $S$ satisfying the following conditions:
\begin{enumerate}[\rm (T1)]
\item
 the set $P_j$ is an $(\varepsilon,K_j)$-filling pattern of $D$ for every $1 \leq j \leq n$;
\item
the subsets $K_jP_j \subset D$, $1 \leq j \leq n$,  are pairwise disjoint;
\item
the subset $D' \subset D$ defined by
$$
D' := D \setminus \bigcup_{1 \leq j \leq n} K_j P_j
$$
is such that  $|D'| \leq \varepsilon |D|$.
\end{enumerate} 
 \end{theorem}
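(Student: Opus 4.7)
The plan is to construct the sequence $(P_j)_{1 \leq j \leq n}$ by backward recursion. Set $\Omega_n := D$ and, for $j = n, n-1, \ldots, 1$: if $|\Omega_j| < \varepsilon|D|$, abort by declaring $P_j := \cdots := P_1 := \varnothing$; otherwise, apply the Filling Lemma (Lemma~\ref{lempreliminaire}) to the pair $(\Omega_j, K_j)$ to obtain an $(\varepsilon, K_j)$-filling pattern $P_j$ of $\Omega_j$ with
$$|K_j P_j| \geq \varepsilon(1-\alpha(\Omega_j, K_j))|\Omega_j|,$$
and set $\Omega_{j-1} := \Omega_j \setminus K_j P_j$. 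Conditions (T1) and (T2) then follow automatically: $P_j \subset \inte_{K_j}(\Omega_j) \subset \inte_{K_j}(D)$ because $\Omega_j \subset D$, while $K_j P_j \subset \Omega_j$ is disjoint from $K_k P_k$ for every $k > j$ by the inductive peeling. The entire difficulty is thus condensed in (T3), namely $|\Omega_0| \leq \varepsilon|D|$. I choose $n_0 = n_0(\varepsilon) \geq 2$ to be any integer satisfying $(1-\varepsilon/2)^{n_0} \leq \varepsilon$; such $n_0$ exists because $\varepsilon \leq 1/2$.

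\textbf{Key estimate and conclusion.} The crucial step is to propagate a uniform bound $\alpha(\Omega_j, K_j) \leq 1/2$ whenever the non-aborted branch is entered, for then Lemma~\ref{lempreliminaire} delivers $|K_j P_j| \geq (\varepsilon/2)|\Omega_j|$ and hence the one-step contraction $|\Omega_{j-1}| \leq (1-\varepsilon/2)|\Omega_j|$. To obtain the bound, I write $\Omega_j = D \setminus A_j$ with $A_j := \bigcup_{k=j+1}^n K_k P_k$; this union is pairwise disjoint by the already-verified (T2), so Proposition~\ref{prodeffrontiere}(iii) together with the disjointness yields $\alpha(A_j, K_j) \leq \max_{k > j} \alpha(K_k P_k, K_j)$. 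For each $k > j$, the family $(K_k s)_{s \in P_k}$ is $\varepsilon$-disjoint and every $s$ is right-cancellable, so Lemma~\ref{lemUnion} combined with Proposition~\ref{prodeffrontiere}(vi) gives
$$\alpha(K_k P_k, K_j) \leq \frac{1}{1-\varepsilon}\alpha(K_k, K_j) \leq \frac{\varepsilon^{2n}}{(1-\varepsilon)|K_j|}$$
by hypothesis \eqref{e:conditions-Kj}. Plugging this and hypothesis \eqref{e:condition-D} into Lemma~\ref{lemAsetminusB} (applicable because $|\Omega_j| \geq \varepsilon|D|$ in the non-aborted branch) yields $\alpha(\Omega_j, K_j) \leq 3\varepsilon^{2n-1} \leq 1/2$ for $n \geq 2$ and $\varepsilon \leq 1/2$. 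If the algorithm never aborts, iterating the contraction from $j = n$ down to $j = 1$ gives $|\Omega_0| \leq (1-\varepsilon/2)^n|D| \leq \varepsilon|D|$ by the choice of $n_0$; if it aborts at some step $j^*$, then $|\Omega_0| \leq |\Omega_{j^*}| < \varepsilon|D|$ directly. Either way, (T3) holds.

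\textbf{Main obstacle.} The only substantive novelty compared with the group case lies in propagating the amenability bound across the recursion: to transfer $\alpha(K_k, K_j)$ into a bound on $\alpha(K_k s, K_j)$ for each translation point $s \in P_k$, one must rely on the invariance under right-translation of the amenability constant (Proposition~\ref{prodeffrontiere}(vi)), which in turn requires the right-cancellativity of every $s \in S$. Without this invariance, the amplification factor $|K_j|$ appearing in Lemma~\ref{lemAsetminusB} would no longer be matched by the $1/|K_j|$ denominator provided by the hypothesis \eqref{e:conditions-Kj}, and the iteration would collapse. This is precisely where the full cancellativity of $S$ enters in an essential manner.
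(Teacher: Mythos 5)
Your proof is correct, and while its skeleton (greedy backward peeling via the Filling Lemma, an abort test, and a per-step contraction $|\Omega_{j-1}|\le(1-\varepsilon/2)|\Omega_j|$) coincides with the paper's construction, the way you control the relative amenability of the remaining set is genuinely different and in fact simpler. The paper maintains induction hypotheses (H($k$;a)) bounding $\alpha(D_{k-1},K_j)$ for \emph{all} remaining indices $j\le n-k+1$, obtained by applying Lemma~\ref{lemAsetminusB} at each step with $\Omega=D_{k-1}$ and $A=K_{n-k+1}P_{n-k+1}$; this compounds the constants into $(2k+1)\varepsilon^{2n-k}$, so that the final estimate $|D_n|\le\big(1-\varepsilon(1-(2n+1)\varepsilon^{n+1})\big)^n|D|$ forces $n_0$ to satisfy both $(2n_0+1)\varepsilon^{n_0+1}\le\tfrac12$ and $(1-\tfrac\varepsilon2)^{n_0}\le\varepsilon$. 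You instead apply Lemma~\ref{lemAsetminusB} once per index, always with $\Omega=D$ and $A=A_j=\bigcup_{k>j}K_kP_k$, and exploit that the already-placed tiles are \emph{pairwise disjoint} (not merely $\varepsilon$-disjoint), so that $\alpha(A_j,K_j)\le\max_{k>j}\alpha(K_kP_k,K_j)\le\frac{\varepsilon^{2n}}{(1-\varepsilon)|K_j|}$ by Lemma~\ref{lemUnion}, Proposition~\ref{prodeffrontiere}(vi) and \eqref{e:conditions-Kj}; combined with \eqref{e:condition-D} this gives the uniform bound $\alpha(\Omega_j,K_j)\le 3\varepsilon^{2n-1}\le\tfrac12$, avoiding any compounding and reducing the choice of $n_0$ to $(1-\tfrac\varepsilon2)^{n_0}\le\varepsilon$ (plus $n_0\ge2$). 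Two trivial edge cases should be stated for completeness: when $A_j=\varnothing$ (e.g. $j=n$) Lemma~\ref{lemAsetminusB} is not applicable, but then $\Omega_j=D$ and \eqref{e:condition-D} gives the bound directly; and in the maximum over $k>j$ you should omit the indices with $P_k=\varnothing$, whose tiles contribute nothing to $A_j$. Neither affects the argument, and your identification of where cancellativity enters (right-translation invariance of $\alpha(\cdot,K_j)$ matching the factor $|K_j|$ in Lemma~\ref{lemAsetminusB} against the $1/|K_j|$ in \eqref{e:conditions-Kj}) is exactly the point the paper makes.
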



\begin{proof}
Fix $\varepsilon \in (0,\dfrac{1}{2}]$ and a positive integer $n$.
 Let $K_j$, $1 \leq j \leq n$, and $D$ be non-empty finite subsets of $S$
 satisfying conditions \eqref{e:conditions-Kj} and \eqref{e:condition-D}.
 \par 
Let us  first define, by induction, a finite process with at most $n$ steps for constructing suitable finite subsets  $P_n,P_{n - 1}, \dots,P_1$  of $S$.
We will see that these subsets have the required properties when $n$ is large enough, i.e., for
$n \geq n_0$ with $n_0 = n_0(\varepsilon)$ that will be made precise at the end of the proof.

 \noindent
\textbf{Step 1.} 
We set $D_0 := D$.
By \eqref{e:condition-D}, we have

 \begin{enumerate}
 \item[{\rm (H(1;a))}] \
 $\alpha(D_{0}, K_j) \le \varepsilon^{2n}$ for all $1\le j \le n $.  
 \end{enumerate}

Using Lemma \ref{lempreliminaire} with $\Omega=D_0 =D$ and $K = K_n$, 
we can find a finite subset
$P_n \subset S$ such that
 
\begin{enumerate}
 \item[{\rm (H(1;b))}]  \
 $P_{n}$ is an $(\varepsilon, K_{n})$-filling pattern for $D_{0}$
 \end{enumerate}

and 
\begin{equation}
\label{eq:H-2-2}
|K_n P_n| \ge 
\varepsilon \big(1-\alpha(D,K_n)\big)|D| \ge \varepsilon (1- \varepsilon^{2n})|D|.
\end{equation}

\begin{enumerate}
\item[{\rm (H(1;c))}] \
Setting
$$
D_1:=D_0 \setminus  K_nP_n,
$$
we deduce  from \eqref{eq:H-2-2} that
\begin{equation*}\label{eqn-inegaliteD1}
|D_1|\le |D| \big(1-\varepsilon (1-\varepsilon^{2n})\big).
\end{equation*}
 \end{enumerate}

\noindent
\textbf{Step $\boldsymbol{k}$.} 
We continue this  process by induction as follows. Suppose that the process has been applied $k$ times, with $1 \le k \le n-1$.
It is assumed that the induction hypotheses at step $k$ are the following:
 \begin{enumerate}
 \item[{\rm (H(k;a))}] \
 $D_{k - 1}$ is a subset of $D$ satisfying 
 $$
\alpha(D_{k-1}, K_j) \le (2k - 1) \varepsilon^{2n-k+1} \quad \text{for all  }1\le j \le n-k+1;
 $$
 \item[{\rm (H(k;b))}] \ 
$P_{n-k+1}\subset S$ is an $(\varepsilon, K_{n-k+1})$-filling pattern for $D_{k-1}$;
 \item[{\rm (H(k;c))}] \
 setting 
 $$
 D_k:=D_{k-1} \setminus K_{n-k+1}P_{n - k + 1},
 $$ 
 we have 
 $$
 |D_k|\le  |D|\prod_{0 \leq i \leq k-1}\left(1-\varepsilon \big(1-(2i+1)\varepsilon^{2n-i}\big)\right).
 $$
 \end{enumerate}
Note that these induction hypotheses are satisfied for $k=1$ by Step 1.
\par
Let us pass to Step $k+1$. \\

\noindent
{\bf{Step $\boldsymbol{k+1}$.}} 
If $|D_k| \le \varepsilon |D_{k-1}|$ and hence $|D_k| \le \varepsilon |D|$,
then we take $P_j = \varnothing$ for all $1 \leq j \leq n - k$ and stop the process.
\par 
Otherwise, we have 
$|D_k|> \varepsilon |D_{k-1}|$. 
 Let us estimate from above, for all $1 \leq j \leq n - k$, the relative amenability constants
$\alpha(D_k,K_j)$.  
 \par
 Let $1 \leq j \leq n - k$.
 \par
 If $P_{n - k + 1} = \varnothing$, then $D_k = D_{k - 1}$ and therefore
\begin{align*}
\alpha(D_k,K_j) 
&= \alpha(D_{k - 1},K_j)\\
 &\leq (2k - 1) \varepsilon^{2n-k+1}  && \text{(by our induction hypothesis (H(k;a)))}\\ 
&\leq (2k + 1) \varepsilon^{2n-k} && \text{(since $0 < \varepsilon < 1$)}.
\end{align*}
Suppose now that $P_{n - k + 1} \not= \varnothing$.
Then we can apply Lemma \ref{lemAsetminusB} with $\Omega := D_{k-1}$ 
and $A := K_{n-k+1}P_{n - k + 1}$. 
This gives us
\begin{equation}
\label{eqn-etapek-alpha}
 \alpha(D_k,K_{j})= \alpha(D_{k-1} \setminus K_{n-k+1}P_{n - k + 1} , K_j) 
 \le \frac{\alpha(D_{k-1},K_{j}) + |K_{j}|\alpha(K_{n-k+1}P_{n-k+1},K_{j}) }{\varepsilon}.
\end{equation}
 Proposition \ref{prodeffrontiere}.(vi) and condition
\eqref{e:conditions-Kj}  imply that, for all $s \in S$,
$$
\alpha(K_{n-k+1}s,K_j)=\alpha(K_{n-k+1},K_j)\le  \frac{\varepsilon^{2n}}{|K_j|}.
$$ 
Since the family $(K_{n-k+1}s)_{s \in P_{n-k+1}}$ is $\varepsilon$-disjoint, 
the preceding inequality together with Lemma \ref{lemUnion} give us
 $$
 \alpha(K_{n-k+1}P_{n - k + 1},K_j) 
 =  \alpha\left(\bigcup_{s\in P_{n-k+1}}K_{n-k+1}s,K_j\right)
 \le  \frac{\varepsilon^{2n}}{(1-\varepsilon)|K_j|}.
 $$
 From inequality
\eqref{eqn-etapek-alpha} and the induction hypothesis (H(k;a)), we deduce that
$$
\alpha(D_{k},K_{j}) \le \frac{(2k-1) \varepsilon^{2n-k+1}}{\varepsilon} +
\frac{\varepsilon^{2n}}{(1-\varepsilon)\ \varepsilon} 
\le (2k+1)\varepsilon^{2n-k}
$$ 
 (for the second inequality, observe that $1/(1-\varepsilon)\le 2$ since $0<\varepsilon \le 1/2$).
 \par 
This shows (H(k+1;a)).
\par 
Using Lemma \ref{lempreliminaire} with $\Omega:=D_k$ and $K:=K_{n-k}$, we can find a finite subset $P_{n-k} \subset S$ such that
$P_{n-k}$ is an $(\varepsilon, K_{n-k})$-filling pattern for $D_k$, thus yielding (H(k+1;b)), and satisfying
\begin{equation}
\label{eq:D-k}
|  K_{n-k} P_{n - k}|\ge \varepsilon\
 \big(1- \alpha(D_k, K_{n-k})\big)|D_k|\ge \varepsilon\ \big(1-(2k+1)\varepsilon^{2n-k} \big)|D_k|.
\end{equation}

Setting 
$$
D_{k+1}:=D_{k} \setminus    K_{n-k} P_{n - k},
$$ 
we deduce from \eqref{eq:D-k} that
$$
|D_{k+1}|\le |D_k|\big(1-\varepsilon \big(1-(2k+1)\varepsilon^{2n-k}\big)\big).
$$
Together with the inequality of the induction hypothesis (H(k;c)), this yields
$$
|D_{k+1}|\le |D|\prod_{0 \leq i \leq k}\left(1-\varepsilon \big(1-(2i+1)\varepsilon^{2n-i}\big)\right).
$$
Thus condition (H(k+1;c)) is also satisfied. This finishes the construction of Step $k+1$ and proves the induction step.

Now, suppose that this process continues until Step $n$.
Using (H(k;c)) for $k=n$, we obtain
\begin{equation}\label{eqn-Dn}
|D_n|\le  |D|\prod_{0 \leq i\leq n-1}\left(1-\varepsilon
\big(1-(2i+1)\varepsilon^{2n-i}\big)\right).
\end{equation}
We will show that for $n \geq n_0$, with $n_0 = n_0(\varepsilon)$ only depending on
$\varepsilon$, we get $|D_n|\le \varepsilon |D|$.
\par
As $(2i+1)\varepsilon^{2n-i} \leq (2n+1)\varepsilon^{n+1}$ for all $0 \leq i \leq n -1$, 
we deduce from  \eqref{eqn-Dn},  that
 \begin{equation}
  \label{eqnfinales}
   |D_n|\le |D| \big(1-\varepsilon(1- (2n+1)\varepsilon^{n+1})\big)^{n}.
\end{equation}
 Since $\lim_{r\to +\infty}(2r+1)\varepsilon^{r+1}=0$ and $\lim_{r\to
 +\infty} (1-\frac{\varepsilon}{2})^{r}=0$, both monotonically for large $r$, we can find an integer $n_0  = n_0(\varepsilon) \geq 1$
such that for all $r\ge n_0$, we have both $(2r+1)\varepsilon^{r+1}\le \frac{1}{2}$ and $(1-\frac{\varepsilon}{2})^{r} \le \varepsilon$. 
Now, if $n\ge n_0$, using inequality \eqref{eqnfinales} we deduce
$$
|D_n|\le |D|\left(1-\frac{\varepsilon}{2}\right)^{n}\le \varepsilon|D|.
$$
  This finishes the proof of the theorem. 
\end{proof}

\section{Proof of the main result}
\label{sec:proof-main-result}
  In this section, we give the proof of Theorem \ref{theOWGromov}.
\par
So let $S$ be a cancellative left-amenable semigroup and let $h \colon \PP_{fin}(S) \to \R$ be a real-valued map satisfying conditions (H1), (H2) and (H3).
\par
First observe  that by taking $A=B$ in condition (H1),  we get $h(A)\le 2h(A)$ and hence 
\begin{equation}
\label{e:h-non-negative} 
h(A) \geq 0 \quad \text{for all  }A\in \PP_{fin}(S).
\end{equation} 
On the other hand, we deduce from (H1)  that
$$
h(A) = h\left(\bigcup_{s \in A} \{s\}\right) \leq \sum_{s \in A} h(\{s\}) 
$$
so that, by using (H3), we get
\begin{equation}
\label{e:h-bound} 
h(A)  \leq M|A| \quad \text{for all} A\in \PP_{fin}(S).
\end{equation}

Let $(F_i)_{i \in I}$ be a left-F\o lner net for $S$.
By Proposition \ref{p:Folner-boundaries}, we have 
\begin{equation}
\label{e:boundary-Folner}
\lim_i \alpha(F_i,K) = 0 \quad \text{for every finite subset $K \subset S$}.
\end{equation}
 
 Consider the quantity
\begin{equation}
\label{eq:lambda}
\lambda := \liminf_i \frac{h(F_i)}{|F_i|}.
\end{equation}
Note that $0 \leq \lambda \leq M$ by \eqref{e:h-non-negative} and \eqref{e:h-bound}.
 \par
  Recall that one says that a finite sequence $(K_j)_{1 \leq j \leq n}$ is \emph{extracted} from the net $(F_i)_{i \in I}$ if there are indices
 $$
 i_1 < i_2 < \dots < i_n
 $$
 in $I$ such that $K_j = F_{i_j}$ for all $1 \leq j \leq n$.
 \par
  Let $\varepsilon > 0$ and let $n$ be a positive integer.
By   \eqref{e:boundary-Folner} and
\eqref{eq:lambda},  
it is clear that we can find, using induction on $n$, a finite sequence $(K_j)_{1 \leq j \leq n}$ extracted  from the  net $(F_i)_{i \in I}$ such that:
\begin{equation*}
 \alpha(K_k, K_j)\le \frac{\varepsilon^{2n}}{|K_j|} \quad \text{for all  } 1\le j < k \le n
\end{equation*}
and
 \begin{equation}
\label{eq:inequality xi}
\frac{h(K_j)}{|K_j|} \le \lambda + \varepsilon \quad \text{for all  } 1 \leq j \leq n.
 \end{equation}

 Suppose now that $0 < \varepsilon \leq  \dfrac{1}{2}$ and that $n \geq n_0$, where 
$n_0 = n_0(\varepsilon)$ is as in Theorem \ref{th:quasi-tile}.
 \par
Let $D \subset S$ be a non-empty finite subset satisfying
$\alpha(D,K_j)\le \varepsilon^{2n}$ for all $1\le j \le n$.
\par
By Theorem \ref{th:quasi-tile}, we can find a sequence $(P_j)_{1 \leq j \leq n}$ of finite subsets of $S$ satisfying the following conditions:  
\begin{enumerate}[\rm (T1)]
\item
 the set $P_j$ is an $(\varepsilon,K_j)$-filling pattern for $D$ for every $1 \leq j \leq n$;
\item
the subsets $K_jP_j \subset D$, $1 \leq j \leq n$,  are pairwise disjoint;
\item
the subset $D' \subset D$ defined by
$$
D' := D \setminus \bigcup_{1 \leq j \leq n} K_j P_j
$$
is such that $|D'| \leq \varepsilon |D|$.
\end{enumerate} 
 We then have
 $$
D = \bigcup_{1 \leq j \leq n} K_jP_j \cup D'. 
$$
 By applying  the subadditivity property (H1) of $h$, it follows that
\begin{equation}
\label{e:bound-hD}
h(D) \leq \sum_{1 \leq j \leq n} h(K_jP_j) + h(D').
\end{equation} 

 As $|D'| \le \varepsilon |D|$ by (T3),
we deduce from \eqref{e:h-bound} that
\begin{equation}
\label{e:bound-h-D-epsilon}
h(D') \leq M\varepsilon  |D|.
\end{equation}

On the other hand, for all $1 \leq j \leq n$, we have
\begin{align*}
h(K_jP_j) &= h\left(\bigcup_{s \in P_j} K_js\right) \\
&\leq \sum_{s \in P_j} h(K_j s) && \text{(by the subadditivity property (H1))} \\
&\leq \sum_{s \in P_j} h(K_j) && \text{(by the right-subinvariance property (H2))} \\
&= \sum_{s \in P_j} \frac{h(K_j)}{|K_j|} |K_j s| 
&& \text{(since $|K_j| = |K_j s|$ by right-cancellability of $s$)} \\
& \leq  (\lambda + \varepsilon)  \sum_{s \in P_j} |K_j s| && \text{(by \eqref{eq:inequality xi}).} 
\end{align*}
As the family $(K_j s)_{s \in P_j}$ is $\varepsilon$-disjoint by (T1), we then deduce from 
Lemma \ref{lemepsilondisjoint} that
$$
h(K_jP_j) \leq \frac{\lambda + \varepsilon}{1 - \varepsilon} \left| \bigcup_{s \in P_j} K_j s \right| = \frac{\lambda + \varepsilon}{1 - \varepsilon} |K_j P_j|.  
$$
This implies
$$
\sum_{1 \leq j \leq n} h(K_jP_j) \leq \frac{\lambda + \varepsilon}{1 - \varepsilon} \sum_{1 \leq j \leq n} |K_j P_j|
$$
and hence
\begin{equation}
\label{e:bound-h-union-KjPj}
\sum_{1 \leq j \leq n} h(K_jP_j) \leq \frac{\lambda + \varepsilon}{1 - \varepsilon} |D|,
\end{equation}
since the sets $K_j P_j$, $1 \leq j \leq n$, are pairwise disjoint subsets of $D$ by (T2).
\par
From \eqref{e:bound-hD}, \eqref{e:bound-h-D-epsilon}, and \eqref{e:bound-h-union-KjPj}, we deduce that
\begin{equation}
\label{eqnmajorationfinaleh(D)}
\frac{h(D)}{|D|} \leq \frac{\lambda + \varepsilon}{1 - \varepsilon} + M\varepsilon.
\end{equation}

By \eqref{e:boundary-Folner}, we can find $i_0 \in I$ such that,
for all $i \geq i_0$,
$$
\alpha(F_{i}, K_j) \le \varepsilon^{2n}\quad \text{for all  }1 \le j \le n.
$$
Hence, by replacing $D$ by $F_i$ for $i\geq i_0$ in inequality \eqref{eqnmajorationfinaleh(D)}, we obtain
\begin{equation*}\label{eqnmajorationfinaleh(D)Fi}
 \frac{h(F_i)}{|F_i|} \le \frac{\lambda + \varepsilon}{1-\varepsilon}+
 M\varepsilon . 
 \end{equation*} 
This implies 
 \begin{equation*} \nonumber
 \limsup_i\frac{h(F_i)}{|F_i|} \le
 \frac{\lambda+\varepsilon}{1-\varepsilon}+M\varepsilon .
 \end{equation*}
 Since the latter inequality is satisfied for all 
 $\varepsilon  \in (0,\dfrac{1}{2}]$, taking the limit as $\varepsilon$ tends to $0$, we obtain
 $$
 \limsup_{i} \frac{h(F_i)}{|F_i|} \le \lambda=
 \liminf_{i} \frac{h(F_i)}{|F_i|}.
 $$ 
This shows that \eqref{eq:lambda} is indeed a true limit.
\par
It only remains to show that $\lambda = \lim_i \dfrac{h(F_i)}{|F_i|}$ does not depend on the choice of the left-F\o lner net $(F_i)_{i \in I}$.
So suppose that $(G_j)_{j \in J}$ is another left-F\o lner net for $S$ and let 
$\nu = \lim_j \dfrac{h(G_j)}{|G_j|}$.
\par
 Take  disjoint copies $I'$ and $J'$ of the sets $I$ and $J$, i.e., sets $I'$ and $J'$ with $I \cap I' = \varnothing$ and $J \cap J' = \varnothing$ together with  bijective maps
 $\varphi \colon I \to I'$ and $\psi \colon J \to J'$.
 Consider the set $T = (I \times J) \cup (I' \times J')$
with the partial ordering  defined as follows.
Given $t_1,t_2 \in T$, we write $t_1 \leq t_2$ if and only if there exist indices $i_1,i_2 \in I$ and $j_1,j_2 \in J$ such that $i_1 \leq i_2$, $j_1 \leq j_2$, and
$$
(t_1 = (i_1,j_1) \text{  or  } t_1 = (\varphi(i_1),\psi(j_1))) \text{  and  } (t_2 = (i_2,j_2) \text{  or  } t_2 = (\varphi(i_2),\psi(j_2))). 
$$ 
Observe that  $(T,\leq)$ is a directed set since $(I,\leq)$ and $(J,\leq)$ are directed sets.
Now we define a net $(H_t)_{t \in T}$ of non-empty finite subsets of $S$ by setting
\begin{equation*}
H_t  =
\begin{cases}
F_i & \text{ if  } t = (i,j) \in I \times J, \\
G_j & \text{  if  } t = (\varphi(i),\psi(j)) \in I' \times J'. 
\end{cases}
\end{equation*}
 Clearly $(H_t)_{t \in T}$ is a left-F\o lner net for $S$.
By the first part of the proof, the net
$\left(\dfrac{h(H_t)}{|H_t|}\right)_{t \in T}$ converges to some $\tau \geq 0$.
Using the fact that  for every $t_1$ in $T$, there exits $t_2$  in $I \times J$ (resp. in $I' \times J'$) such that $t_1 \leq t_2$, we conclude that
$\tau = \lambda = \nu$.
This completes the proof of Theorem \ref{theOWGromov}.

\section{Applications to dynamical systems} 
\label{sec:applications}
 
\subsection*{Topological entropy}
(cf.  \cite{adler-entropy})
 Let $X$ be a compact topological space.
 \par
 An \emph{open cover} of $X$ is a family of open subsets of $X$ whose union is $X$.
  Let  $\UU = (U_j)_{j \in J}$ and $\VV = (V_k)_{k \in K}$ be two open covers of $X$.
 One says that $\VV$ is \emph{finer}  than  $\UU$,  and one writes $\VV \succ \UU$, 
 if, for each $k \in K$, there exists $j \in J$ such that 
$V_k \subset U_j$.
 One says that $\VV$ is a \emph{subcover}  of $\UU$ if $K \subset J$ 
 and $V_k = U_k$ for all $k \in K$.
One writes $\UU \cong \VV$ if  
$\{U_j : j \in J\} = \{V_k : k \in K\}$, that is, if   the open subsets of $X$  appearing in $\UU$ and $\VV$ are the same (as soon as we forget that they are indexed).
 \par
  The \emph{join}  of
 $\UU$ and $\VV$ 
is the open cover $\UU \vee \VV$ of $x$ defined by   
 $\UU \vee \VV := (U_j \cap V_k)_{(j,k) \in J \times K}$.
If $f \colon X \to X$ is a continuous map, the \emph{pullback} of $\UU$ by $f$ is the open cover  $f^{-1}(\UU)$ of $X$ defined by
$f^{-1}(\UU) := (f^{-1}(U_j))_{j \in J}$.
\par
Since $X$ is compact,   every open cover of $X$ admits a finite subcover.
Given an open cover $\UU $ of $X$,  let $N(\UU)$ denote  
 the smallest integer 
$n \geq 0$ such that $\UU$ admits a subcover of cardinality $n$.

 \begin{lemma}
\label{l:properties-N-open-cover}
Let $X$ be a compact space. 
Let   $\UU = (U_j)_{j \in J}$ and $\VV = (V_k)_{k \in K}$ be two open covers of $X$.
Then one has
\begin{enumerate}[\rm (i)]
\item
  $N(\UU \vee \VV) \leq N(\UU)N(\VV)$;
\item
if  $\VV \succ \UU$ then $N(\VV) \geq N(\UU)$;
\item
if $\UU \cong \VV$ then $N(\UU) = N(\VV)$;
\item
if $f \colon X \to X$ is a continuous map then
 $N(f^{-1}(\UU)) \leq N(\UU)$.
\end{enumerate} 
\end{lemma}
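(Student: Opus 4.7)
The lemma consists of four essentially combinatorial statements about subcovers, all of which follow by extracting minimal subcovers from $\UU$ and $\VV$ and manipulating them. My plan is to handle each item separately, treating (iii) and (iv) as short observations and spending a little more care on (i) and (ii).

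For (i), I would choose a subcover $\UU_0 = \{U_{j_1}, \dots, U_{j_m}\}$ of $\UU$ with $m = N(\UU)$ and a subcover $\VV_0 = \{V_{k_1}, \dots, V_{k_n}\}$ of $\VV$ with $n = N(\VV)$. Since both cover $X$, the family $(U_{j_a} \cap V_{k_b})_{1 \le a \le m,\, 1 \le b \le n}$ also covers $X$, and by construction it is a subfamily of $\UU \vee \VV$ (indexed by the corresponding pairs in $J \times K$). This gives a subcover of $\UU \vee \VV$ of cardinality at most $mn$, hence $N(\UU \vee \VV) \le N(\UU) N(\VV)$.

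For (ii), suppose $\VV \succ \UU$ and pick a minimal subcover $\{V_{k_1}, \dots, V_{k_n}\}$ of $\VV$ with $n = N(\VV)$. By the definition of $\succ$, for each $i$ I can select an index $j_i \in J$ with $V_{k_i} \subset U_{j_i}$. Then $\bigcup_i U_{j_i} \supset \bigcup_i V_{k_i} = X$, so the family indexed by $\{j_1, \dots, j_n\}$ (after removing duplicates) is a subcover of $\UU$ of cardinality at most $n$, proving $N(\UU) \le n = N(\VV)$. For (iii), if $\UU \cong \VV$ then the sets of open subsets appearing are identical, so any minimal subcover of one gives a subcover of the other of the same cardinality (by reindexing), and the inequality goes both ways. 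For (iv), taking a minimal subcover $\{U_{j_1}, \dots, U_{j_n}\}$ of $\UU$ with $n = N(\UU)$, the family $(f^{-1}(U_{j_i}))_{1 \le i \le n}$ still covers $X$ since $f^{-1}$ commutes with unions and $f^{-1}(X) = X$, giving a subcover of $f^{-1}(\UU)$ of cardinality at most $n$.

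There is no real obstacle here: the only mild subtlety is to keep careful track of the distinction between a subcover (a subfamily with the induced indexing) and an arbitrary subfamily of open sets, so that cardinalities are counted as numbers of indices rather than numbers of distinct sets. Once that bookkeeping is fixed, each item reduces to a one-line verification.
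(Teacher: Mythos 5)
Your argument is correct: the paper itself offers no proof, simply declaring these properties obvious and citing Adler--Konheim--McAndrew, and your explicit verifications (extracting minimal subcovers and intersecting, refining, reindexing, or pulling back by $f$) are exactly the standard one-line arguments intended there. The bookkeeping point you raise about counting indices rather than distinct sets is handled properly, so nothing is missing.
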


\begin{proof}
These properties are all obvious (see for example \cite{adler-entropy}).
\end{proof}

Now suppose that the compact space $X$ is endowed with a continuous  action of a semigroup $S$.
This means that we are given a map $S \times X \to X$, $(s,x) \mapsto sx$, satisfying the following conditions:
(1) one has $s_1(s_2x) = (s_1s_2) x$ for all $s_1,s_2 \in S$ and $x \in X$;
(2) the map $T_s \colon X \to X$ defined by $T_s(x) := sx$ is continuous for all $s \in S$.
\par
Let $\UU$ be an open cover of $X$.
Consider the map $h_\UU \colon \PP_{fin}(S) \to \R$ defined by
\begin{equation}
\label{e:def-h-U}
h_\UU (A) := \log  N(\UU_A),
\end{equation}
where
\begin{equation}
\label{e:def-U-A}
\UU_A := \bigvee_{s \in A} T_s^{-1}(\UU).
\end{equation}
(By convention, $\UU_\varnothing = \{X\}$ so that $h_\UU(\varnothing) = 0$.)

\begin{proposition}
\label{p:properties-h-U}
Let $X$ be a compact space equipped with a continuous  action of a semigroup $S$ and let $\UU$ be an open cover of $X$.
Then the  map $h_\UU \colon \PP_{fin}(S) \to \R$ defined by \eqref{e:def-h-U} is non-decreasing, subadditive, 
right-subinvariant, and uniformly bounded on singletons.
 \end{proposition}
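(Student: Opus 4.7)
The plan is to derive each of the four conclusions directly from the corresponding properties of $N(\cdot)$ in Lemma \ref{l:properties-N-open-cover}, combined with the identity $T_{ts} = T_t \circ T_s$ forced by the left-action axioms. I would treat the monotonicity and the subadditivity jointly. For any finite subsets $A, B \subset S$, an element of $\UU_A \vee \UU_B$ is the intersection of an independent choice of $T_t^{-1}(U_{j_t})$ for each $t \in A$ and for each $t \in B$, while an element of $\UU_{A \cup B}$ uses only one choice per element of $A \cup B$; hence $\UU_A \vee \UU_B \succ \UU_{A \cup B}$. Specializing to $A \subset B$ yields $\UU_B \cong \UU_A \vee \UU_{B \setminus A} \succ \UU_A$, so Lemma \ref{l:properties-N-open-cover}(ii) gives $N(\UU_A) \leq N(\UU_B)$, which is the non-decreasing property. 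In the general case, parts (i), (ii), (iii) of the same lemma together yield $N(\UU_{A \cup B}) \leq N(\UU_A \vee \UU_B) \leq N(\UU_A) N(\UU_B)$, and taking logarithms gives (H1).

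For the right-subinvariance I would use $T_{ts} = T_t \circ T_s$ together with the fact that preimages commute with finite intersections, hence with joins of open covers, to rewrite
\[
T_s^{-1}(\UU_A) = \bigvee_{t \in A} T_s^{-1}(T_t^{-1}(\UU)) = \bigvee_{t \in A} T_{ts}^{-1}(\UU).
\]
This cover refines $\UU_{As} = \bigvee_{u \in As} T_u^{-1}(\UU)$: given $V = \bigcap_{t \in A} T_{ts}^{-1}(U_{j_t})$ in $T_s^{-1}(\UU_A)$, one chooses for each $u \in As$ some $t_u \in A$ with $t_u s = u$ and observes that $W := \bigcap_{u \in As} T_u^{-1}(U_{j_{t_u}}) \in \UU_{As}$ satisfies $V \subset W$, since the right-hand intersection runs over a sub-collection of the indices appearing on the left. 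Lemma \ref{l:properties-N-open-cover}(ii) applied to the refinement $T_s^{-1}(\UU_A) \succ \UU_{As}$, followed by Lemma \ref{l:properties-N-open-cover}(iv) applied to the continuous map $T_s$, then gives
\[
N(\UU_{As}) \leq N(T_s^{-1}(\UU_A)) \leq N(\UU_A),
\]
which is (H2) after taking logarithms.

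The uniform boundedness on singletons is immediate from $\UU_{\{s\}} = T_s^{-1}(\UU)$ and Lemma \ref{l:properties-N-open-cover}(iv), which together give $h_\UU(\{s\}) \leq \log N(\UU)$ for every $s \in S$, so that $M := \log N(\UU)$ serves as the required constant. The only genuinely delicate point in the argument is the right-subinvariance step: when $s \in S$ is not right-cancellable, the map $t \mapsto ts$ from $A$ to $As$ may fail to be injective and $T_s^{-1}(\UU_A)$ can be strictly finer than $\UU_{As}$, so one cannot simply identify these two covers and must instead proceed through the one-sided refinement combined with Lemma \ref{l:properties-N-open-cover}(iv) as above.
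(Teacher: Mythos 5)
Your proof is correct and follows essentially the same route as the paper: every property is deduced from Lemma \ref{l:properties-N-open-cover} together with $T_{as}=T_a\circ T_s$. Two small differences are worth recording. For subadditivity you use the refinement $\UU_A\vee\UU_B\succ\UU_{A\cup B}$ for arbitrary finite $A,B$ and conclude directly via parts (i)--(iii), whereas the paper first treats disjoint $A,B$ (where $\UU_{A\cup B}=\UU_A\vee\UU_B$ holds exactly) and then reduces the general case to it by writing $A\cup B=(A\setminus B)\cup B$ and invoking monotonicity; both are fine, yours is marginally shorter. More substantively, at the right-subinvariance step the paper asserts $\UU_{As}\cong T_s^{-1}(\UU_A)$, which as stated presupposes that $a\mapsto as$ is injective on $A$; since the proposition assumes only an arbitrary semigroup, $T_s^{-1}(\UU_A)$ may indeed be strictly finer than $\UU_{As}$ when $s$ is not right-cancellable, exactly as you point out. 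Your one-sided argument, $T_s^{-1}(\UU_A)\succ\UU_{As}$ hence $N(\UU_{As})\leq N(T_s^{-1}(\UU_A))\leq N(\UU_A)$ by parts (ii) and (iv), is the correct general statement; note that the paper's conclusion is unaffected, since only this inequality (not the equality $N(\UU_{As})=N(T_s^{-1}(\UU_A))$) is needed. The remaining points (monotonicity, boundedness on singletons via $\UU_{\{s\}}=T_s^{-1}(\UU)$ and $M=\log N(\UU)$) coincide with the paper's proof.
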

 
 \begin{proof}
Let $A$ and $B$ be finite subsets of $S$.
\par
If $A \subset B$, then $\UU_B$ is finer than 
 $\UU_A$. 
 This implies $N(\UU_A) \leq N(\UU_B)$ by Lemma \ref{l:properties-N-open-cover}.(ii)  and hence $h_\UU(A) \leq h_\UU(B)$.
 This shows that $h_\UU$ is non-decreasing.
 \par
 Suppose now that $A$ and $B$ are disjoint.
Then we have $\UU_{A \cup B} = \UU_A \vee \UU_B$  and hence
$N(\UU_{A \cup B}) \leq N(\UU_A)N(\UU_B)$. This implies
$h_\UU(A \cup B) \leq h_\UU(A) + h_\UU(B)$.
 \par
If $A$ and $B$ are arbitrary subsets of $S$,
 we can write
 \begin{align*}
h_\UU(A \cup B) &= h_\UU((A \setminus B) \cup B) \\
&\leq h_\UU(A \setminus B) + h_\UU(B) && \text{(since $A \setminus B$ and $B$ are disjoint)} \\
& \leq h_\UU(A) + h_\UU(B) && \text{(since $h$ is non-decreasing).}
\end{align*}
this shows that $h_\UU$ is subadditive.
 \par
To prove right-subinvariance,  we first observe that, for every $s \in S$ and any finite subset $A$ of $S$, we have 
\begin{align*}
\UU_{As} & = \bigvee_{t \in As} T_t^{-1}(\UU) \\
& \cong \bigvee_{a \in A} T_{as}^{-1}(\UU) \\
& = \bigvee_{a \in A}(T_a \circ T_s)^{-1}(\UU) \\ 
& = \bigvee_{a \in A} T_s^{-1}(T_a^{-1}(\UU)) \\
& = T_s^{-1}\left(\bigvee_{a \in A} T_a^{-1}(\UU)\right) \\
& = T_s^{-1}(\UU_A).
\end{align*}
 We then deduce that
$$
h_\UU(As) = \log N(\UU_{As}) = \log N(T_s^{-1}(\UU_A)) \leq \log N(\UU_A) = h_\UU(A)
$$
by using assertions (iii) and (iv) in Lemma \ref{l:properties-N-open-cover}. This shows that 
$h_\UU$ is right-subinvariant.
\par
  Finally, for all $s \in S$, we have
 $$
 h_\UU(\{s\}) = \log N(T_s^{-1}(\UU)) \leq \log N(\UU)
 $$
 by Lemma \ref{l:properties-N-open-cover}.(iv). It follows that $h_\UU$ is uniformly bounded on singletons.
 \end{proof}

From Proposition \ref{p:properties-h-U} and Theorem \ref{theOWGromov}, we deduce the following result.

\begin{theorem}
Let $X$ be a compact space equipped with a continuous action of a cancellative left-amenable semigroup $S$ and let $\UU$ be an open cover of $X$.
Then, for every left-F\o lner net $(F_i)_{i \in I}$ of $S$, the limit
$$
\eta_\UU := \lim_i \frac{h_\UU(F_i)}{|F_i|}
$$
exists and is finite. Moreover, $\eta_\UU$ does not depend on the choice of the left-F\o lner net $(F_i)_{i \in I}$.
\end{theorem}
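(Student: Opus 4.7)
The plan is to observe that the statement is an immediate corollary of Theorem \ref{theOWGromov} once one verifies that the map $h_{\UU}$ defined by \eqref{e:def-h-U} meets its hypotheses. Since the compact space $X$ always admits a finite subcover of any open cover, $N(\UU_A)$ is a positive integer for every $A \in \PP_{fin}(S)$, so $h_{\UU}(A) = \log N(\UU_A)$ is a well-defined non-negative real number, and in particular $h_{\UU}$ is an $\R$-valued map on $\PP_{fin}(S)$.

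Next, I would appeal directly to Proposition \ref{p:properties-h-U}, which has already been established: it tells us that $h_{\UU}$ is subadditive (that is, satisfies (H1)), right-subinvariant (that is, satisfies (H2)), and uniformly bounded on singletons by $\log N(\UU)$ (that is, satisfies (H3) with $M = \log N(\UU)$). No new verification is needed here.

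Since by hypothesis $S$ is a cancellative left-amenable semigroup, Theorem \ref{theOWGromov} applies to $h_{\UU}$ and produces a real number $\lambda \geq 0$, depending only on $h_{\UU}$ (and hence only on $\UU$ and the action of $S$ on $X$), such that
\[
\lim_{i} \frac{h_{\UU}(F_i)}{|F_i|} = \lambda
\]
for every left-F\o lner net $(F_i)_{i \in I}$ of $S$. Setting $\eta_{\UU} := \lambda$ yields at once the existence of the limit, its independence from the choice of left-F\o lner net, and the finiteness bound $0 \leq \eta_{\UU} \leq \log N(\UU)$ (since $h_{\UU}(F_i)/|F_i| \leq M = \log N(\UU)$ for all $i$ by \eqref{e:h-bound} applied inside the proof of Theorem \ref{theOWGromov}).

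There is no real obstacle: all the substantive work has been absorbed into Theorem \ref{theOWGromov} and Proposition \ref{p:properties-h-U}, and the present statement is only the assembly of those two inputs. The only point worth stating explicitly in the write-up is the finiteness of $\eta_{\UU}$, which is automatic from the bound on singletons.
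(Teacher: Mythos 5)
Your proposal is correct and follows exactly the paper's own route: the paper also deduces this theorem directly by combining Proposition \ref{p:properties-h-U} (which gives (H1), (H2), and (H3) with $M=\log N(\UU)$) with Theorem \ref{theOWGromov}. Nothing further is needed.
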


The quantity  $0 \leq \eta \leq + \infty$ defined by $\eta := \sup_\UU \eta_\UU$, where $\UU$ runs over all open covers of $X$, is the \emph{topological entropy} of the continuous dynamical system $(X,S)$. 

\subsection*{Topological mean dimension}
(cf. \cite{gromov}, \cite{lindenstrauss-weiss}, \cite{coornaert-krieger}, \cite{coornaert-smf})
Let $X$ be a compact metrizable space.
\par
Let $\UU = (U_j)_{j \in J}$ be a finite open cover of $X$.
The \emph{local order} of $\UU$ at a point $x \in X$ is the integer
$\ord(\UU,x) := 1 + m(\UU,x)$, where $m(\UU,x)$ is the number of indices $j \in J$ such that $x \in U_j$.
The \emph{order} of $\UU$ is the integer $\ord(\UU) := \max_{x \in X} \ord(\UU,x)$.
Define the integer $D(\UU)$ by 
$D(\UU) := \min_\VV \ord(\VV)$,
where $\VV$ runs over all finite open covers of $X$ such that $\VV \succ \UU$.
The quantity $0 \leq \dim(X) \leq +\infty$ defined by $\dim(X) := \sup_\UU D(\UU)$, where $\UU$ runs over all finite open covers of $X$, is the \emph{topological dimension} of $X$
(cf. \cite{hurewicz-wallman}).
 
 \begin{lemma}
\label{l:properties-D-open-cover}
Let $X$ be a compact metrizable space. 
Let   $\UU = (U_j)_{j \in J}$ and $\VV = (V_k)_{k \in K}$ be two finite  open covers of $X$.
Then one has
\begin{enumerate}[\rm (i)]
\item
  $D(\UU \vee \VV) \leq D(\UU) + D(\VV)$;
\item
if  $\VV \succ \UU$ then $D(\VV) \geq D(\UU)$;
\item
if $\UU \cong \VV$ then $D(\UU) = D(\VV)$;
\item
if $f \colon X \to X$ is a continuous map then
 $D(f^{-1}(\UU)) \leq D(\UU)$.
\end{enumerate} 
\end{lemma}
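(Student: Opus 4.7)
The plan is to mirror the structure of the proof of Lemma \ref{l:properties-N-open-cover}, with $D$ playing the role of $\log N$. Properties (ii), (iii), and (iv) will be handled by formal arguments. For (ii), I would use transitivity of the refinement relation: any finite open cover refining $\VV$ also refines $\UU$, so the infimum defining $D(\VV)$ ranges over a subfamily of those defining $D(\UU)$. Property (iii) follows immediately, since $\UU \cong \VV$ means the two covers have the same underlying collection of open sets, hence the same refinements and the same orders. For (iv), I would observe that $f^{-1}$ commutes with $\vee$ and preserves the refinement relation, and that $\ord(f^{-1}(\WW)) \leq \ord(\WW)$ for any finite open cover $\WW$---since $x \in f^{-1}(W_j)$ if and only if $f(x) \in W_j$, the local multiplicity of $f^{-1}(\WW)$ at $x$ equals that of $\WW$ at $f(x)$---and apply this to a refinement $\WW \succ \UU$ realizing $\ord(\WW) = D(\UU)$.

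The main obstacle is (i). Unlike the multiplicative bound $N(\UU \vee \VV) \leq N(\UU) N(\VV)$ from Lemma \ref{l:properties-N-open-cover}, whose logarithm gives exactly the desired subadditivity of $\log N$, the direct analogue fails for $D$: if $\UU^* \succ \UU$ and $\VV^* \succ \VV$ realize the minimal orders $D(\UU)$ and $D(\VV)$, a point lying in $a$ elements of $\UU^*$ and $b$ elements of $\VV^*$ lies in $ab$ elements of the join $\UU^* \vee \VV^*$, so the order of the join is only controlled by the product $D(\UU) \cdot D(\VV)$, not by the sum. A more delicate construction is required.

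The approach I would take is to invoke a classical characterization from dimension theory (essentially Ostrand's coloring theorem, already implicit in \cite{hurewicz-wallman}): for a finite open cover $\WW$ of a compact metrizable space, $D(\WW) \leq n$ if and only if $\WW$ admits a finite open refinement decomposing as a union of $n$ pairwise disjoint subfamilies of open sets. Applying this to optimal refinements of $\UU$ and $\VV$ yields decompositions whose ``colors'' can be combined in such a way that each point of $X$ is covered by at most $D(\UU) + D(\VV)$ sets of the resulting common refinement of $\UU \vee \VV$. Since the combinatorial argument belongs to classical dimension theory and is not specific to the semigroup setting of the present paper, I would refer to \cite{hurewicz-wallman} for the detailed construction rather than reproduce it here.
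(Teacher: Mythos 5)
The paper does not actually prove this lemma: its ``proof'' is the citation ``see \cite{lindenstrauss-weiss}, \cite{coornaert-krieger}, or \cite{coornaert-smf}'', so the relevant comparison is with the standard proofs in those references. Your treatment of (ii), (iii), (iv) is correct and is exactly the routine argument one would write out (monotonicity of the minimum under refinement, invariance under reindexing, and $\ord(f^{-1}(\WW),x)=\ord(\WW,f(x))$ applied to an optimal refinement of $\UU$). You also correctly identify the real issue: the naive join of optimal refinements only gives the product bound, so (i) does not follow formally as in Lemma \ref{l:properties-N-open-cover}.

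The gap is in your proposed resolution of (i). The characterization you invoke (a cover has $D\le n$ iff it admits a refinement splitting into $n$ pairwise disjoint subfamilies, modulo the paper's shifted convention for $\ord$) is a correct classical fact, but the step ``the colors can be combined so that each point lies in at most $D(\UU)+D(\VV)$ sets of a common refinement of $\UU\vee\VV$'' is precisely where all the work lies, and as described it does not go through: if $\UU^*=\bigcup_i \AAA_i$ and $\VV^*=\bigcup_j \BB_j$ are the colored optimal refinements, any candidate refinement built from the sets $A\cap B$ ($A\in\AAA_i$, $B\in\BB_j$) can genuinely have order close to $D(\UU)\cdot D(\VV)$, and no assignment of $D(\UU)+D(\VV)$ colors to the pairs $(i,j)$ yields disjoint color classes, because sets carrying different $\UU^*$-colors and different $\VV^*$-colors may intersect. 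One must pass to a strictly finer cover than $\UU^*\vee\VV^*$, and constructing it is the content of (i). The proof in the paper's references does this via the characterization of $D(\UU)$ as the smallest dimension of a (compact) polyhedron $P$ admitting a $\UU$-compatible continuous map $X\to P$ (preimages of points contained in members of $\UU$): if $f\colon X\to P$ is $\UU$-compatible with $\dim P=D(\UU)$ and $g\colon X\to Q$ is $\VV$-compatible with $\dim Q=D(\VV)$, then $(f,g)\colon X\to P\times Q$ is $(\UU\vee\VV)$-compatible and $\dim(P\times Q)=\dim P+\dim Q$ for polyhedra; pulling back a fine cover of $P\times Q$ of order $\dim(P\times Q)+1$ gives the required refinement. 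This polyhedral (or subdivision) input cannot be replaced by a direct recoloring of the two given decompositions, and the reference you give is also misplaced: the per-cover subadditivity of $D$ is not in \cite{hurewicz-wallman} (Ostrand-type decompositions postdate it), whereas it is proved in \cite{lindenstrauss-weiss}, \cite{coornaert-krieger}, and \cite{coornaert-smf}. So either cite those sources, as the paper does, or supply the compatible-map argument; the coloring sketch as written is not a proof of (i).
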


\begin{proof}
See for example \cite{lindenstrauss-weiss}, \cite{coornaert-krieger}, or \cite{coornaert-smf}.
\end{proof}

Let $X$ be a compact metrizable space equipped with a continuous action of a semigroup $S$. 
Let $\UU$ be a finite open cover of $X$.
Consider the map $h_\UU^{\text{dim}}  \colon \PP_{fin}(S) \to \R$ defined by
\begin{equation}
\label{e:def-hdim-U}
h_\UU^{\text{dim}} (A) :=   D(\UU_A),
\end{equation}
where $\UU_A$ is defined by \eqref{e:def-U-A}.
 
\begin{proposition}
\label{p:properties-hdim-U}
Let $X$ be a compact metrizable space equipped with a continuous  action of a semigroup $S$ and let $\UU$ be a finite  open cover of $X$.
Then the  map $h_\UU^{\text{dim}} \colon \PP_{fin}(S) \to \R$ defined by \eqref{e:def-hdim-U} is non-decreasing, subadditive, 
right-subinvariant, and uniformly bounded on singletons.
 \end{proposition}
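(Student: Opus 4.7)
The plan is to mimic verbatim the proof of Proposition \ref{p:properties-h-U}, substituting the properties of $D$ given by Lemma \ref{l:properties-D-open-cover} in place of those of $N$ given by Lemma \ref{l:properties-N-open-cover}. The only structural difference between the two results is that $N$ is submultiplicative under joins while $D$ is subadditive; since $h_\UU^{\text{dim}}$ is defined without a logarithm, this matches exactly what is needed.

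First I would verify that $h_\UU^{\text{dim}}$ is non-decreasing: if $A \subset B$, then every cover in $\UU_B$ refines one in $\UU_A$, so $\UU_B \succ \UU_A$, and Lemma \ref{l:properties-D-open-cover}(ii) gives $D(\UU_A) \leq D(\UU_B)$. For subadditivity, I would first handle disjoint $A$ and $B$: the identity $\UU_{A \cup B} = \UU_A \vee \UU_B$ combined with Lemma \ref{l:properties-D-open-cover}(i) yields $h_\UU^{\text{dim}}(A \cup B) \leq h_\UU^{\text{dim}}(A) + h_\UU^{\text{dim}}(B)$. The general case then follows by writing $A \cup B = (A \setminus B) \cup B$ and invoking monotonicity, exactly as in the proof of Proposition \ref{p:properties-h-U}.

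For right-subinvariance, the same chain of identifications used for $h_\UU$ applies: for $s \in S$ and $A \in \PP_{fin}(S)$, one has
\[
\UU_{As} = \bigvee_{a \in A} T_{as}^{-1}(\UU) = \bigvee_{a \in A} T_s^{-1}(T_a^{-1}(\UU)) = T_s^{-1}(\UU_A),
\]
after noting (via $\cong$) that indexing the join by $As$ or by $A$ produces the same collection of open sets. Applying Lemma \ref{l:properties-D-open-cover}(iii) and (iv), we conclude $D(\UU_{As}) \leq D(\UU_A)$, that is, $h_\UU^{\text{dim}}(As) \leq h_\UU^{\text{dim}}(A)$. Finally, for the singleton bound, $h_\UU^{\text{dim}}(\{s\}) = D(T_s^{-1}(\UU)) \leq D(\UU)$ by Lemma \ref{l:properties-D-open-cover}(iv), and $D(\UU)$ is a finite constant independent of $s$.

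There is really no serious obstacle here — the proof is entirely parallel to that of Proposition \ref{p:properties-h-U} once one observes that $D$ behaves on $\vee$, $\succ$, $\cong$ and pullbacks just as $\log N$ does. The only step that might deserve a brief justification is the identification $\UU_{As} \cong T_s^{-1}(\UU_A)$, which needs the fact that as the index $a$ runs over $A$ the map $a \mapsto as$ is injective (by right-cancellability of $s$ if $S$ is cancellative, but in fact only needed up to the equivalence $\cong$, so no cancellativity hypothesis is required here).
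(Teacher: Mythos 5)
Your proposal is correct and takes essentially the same route as the paper: the paper's own proof is literally ``mutatis mutandis, the proof is that of Proposition \ref{p:properties-h-U} with Lemma \ref{l:properties-D-open-cover} replacing Lemma \ref{l:properties-N-open-cover}'', which is exactly what you carried out, including the right observation that the absence of a logarithm in \eqref{e:def-hdim-U} matches the additivity (rather than submultiplicativity) of $D$ under joins, and the correct remark that the identification $\UU_{As} \cong \bigvee_{a \in A} T_{as}^{-1}(\UU)$ needs no cancellativity since $\cong$ only compares the underlying collections of open sets.
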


\begin{proof}
Mutatis mutandis, the proof  is  that of Proposition \ref{p:properties-h-U} with 
Lemma \ref{l:properties-D-open-cover} replacing Lemma \ref{l:properties-N-open-cover}.
\end{proof} 

From Proposition \ref{p:properties-hdim-U} and Theorem \ref{theOWGromov}, we deduce the following result.

\begin{theorem}
Let $X$ be a compact metrizable space equipped with a continuous action of a cancellative 
left-amenable semigroup $S$ and let $\UU$ be a finite  open cover of $X$.
Then, for every left-F\o lner net $(F_i)_{i \in I}$ of $S$, the limit
$$
\eta_\UU^{\text{dim}}  := \lim_i \frac{h_\UU^{\text{dim}} (F_i)}{|F_i|}
$$
exists and is finite. Moreover, $\eta_\UU^{\text{dim}}$ does not depend on the choice of the left-F\o lner net $(F_i)_{i \in I}$.
\end{theorem}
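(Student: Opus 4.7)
The plan is to invoke Theorem \ref{theOWGromov} directly, with $h := h_\UU^{\text{dim}}$. There is essentially no obstacle to overcome: the substantive work has already been done in Proposition \ref{p:properties-hdim-U} and in the main theorem of Section \ref{sec:proof-main-result}. What remains is merely to check that the hypotheses of Theorem \ref{theOWGromov} are met and to read off the conclusion.

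First, I would note that Proposition \ref{p:properties-hdim-U} tells us that $h_\UU^{\text{dim}} \colon \PP_{fin}(S) \to \R$ is subadditive (so (H1) holds), right-subinvariant (so (H2) holds), and uniformly bounded on singletons (so (H3) holds with some $M \geq 0$ obtained from $\ord(\UU)$ via Lemma \ref{l:properties-D-open-cover}.(iv)). Since $S$ is cancellative and left-amenable by hypothesis, every assumption of Theorem \ref{theOWGromov} is satisfied.

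Applying Theorem \ref{theOWGromov} then yields a real number $\lambda \geq 0$, depending only on $h_\UU^{\text{dim}}$ (equivalently, only on $X$, $S$, and $\UU$), such that for every left-F\o lner net $(F_i)_{i \in I}$ of $S$ the net $\bigl(h_\UU^{\text{dim}}(F_i)/|F_i|\bigr)_{i \in I}$ converges to $\lambda$. Setting $\eta_\UU^{\text{dim}} := \lambda$ gives simultaneously the existence of the limit, its finiteness (indeed $0 \leq \eta_\UU^{\text{dim}} \leq M$), and its independence of the chosen left-F\o lner net. This completes the proof.

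Since this is a direct corollary, I would keep the proof to at most a few lines in the final LaTeX, simply writing: \emph{By Proposition \ref{p:properties-hdim-U}, the map $h_\UU^{\text{dim}}$ satisfies conditions (H1), (H2), (H3) of Theorem \ref{theOWGromov}. The conclusion is therefore an immediate application of that theorem.}
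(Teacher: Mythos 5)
Your proposal is correct and is exactly the paper's own argument: the result is stated there as an immediate consequence of Proposition \ref{p:properties-hdim-U} (which gives (H1)--(H3) for $h_\UU^{\text{dim}}$) combined with Theorem \ref{theOWGromov}. Nothing further is needed.
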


The quantity  $0 \leq \eta^{\text{dim}}  \leq + \infty$ defined by $\eta^{\text{dim}}  := \sup_\UU \eta_\UU^{\text{dim}}$, where $\UU$ runs over all finite open covers of $X$, is the \emph{topological mean dimension} of the continuous dynamical 
system $(X,S)$.

\subsection*{Measure-theoretic entropy}
(cf. \cite{kolmogorov-entropy}, \cite{sinai}, \cite{katok-hasselblatt})
Let $X = (X,\BB,p)$ be a probability space.
\par
A \emph{finite measurable partition} of $X$ is a finite family $\UU = (U_j)_{j \in J}$ of pairwise disjoint measurable subsets of $X$ whose union is $X$ (here, equalities for subsets of $X$ are understood to hold  up to null-measure sets).
The join operation $\vee$, as well as  the relations $\succ$ and $\cong$, can also  be defined for finite measurable partitions.
Moreover, if $T \colon X \to X$ is a measurable map and $\UU = (U_j)_{j \in J}$ is a finite measurable partition of $X$, then $T^{-1}(\UU) := (T^{-1}(U_j))_{j \in J}$ is also a finite measurable partition of $X$.
\par
If $\UU = (U_j)_{j \in J}$ is a finite measurable partition of $X$, we define the real number
$E(\UU) \geq 0$ by 
$$
E(\UU) := - \sum_{j \in J} p(U_j) \log p(U_j),
$$
with the usual convention $0 \log 0 = 0$.
\par
  A measurable map $T \colon X \to X$ is said to be  \emph{measure-preserving} if $p(T^{-1}(B)) = p(B)$ for all $B \in \BB$.

 \begin{lemma}
\label{l:properties-K-measurable-partition}
Let $(X,\BB,p)$ be a probability space. 
Let   $\UU = (U_j)_{j \in J}$ and $\VV = (V_k)_{k \in K}$ be two finite  measurable partitions of $X$.
Then one has
\begin{enumerate}[\rm (i)]
\item
  $E(\UU \vee \VV) \leq E(\UU) + E(\VV)$;
\item
if  $\VV \succ \UU$ then $E(\VV) \geq E(\UU)$;
\item
if $\UU \cong \VV$ then $E(\UU) = E(\VV)$;
\item
if $T \colon X \to X$ is a measure-preserving map then
 $E(T^{-1}(\UU)) = E(\UU)$.
\end{enumerate} 
\end{lemma}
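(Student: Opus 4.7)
The plan is to treat the four assertions in the order \textup{(iii)}, \textup{(iv)}, \textup{(i)}, \textup{(ii)}, since later parts may lean on the earlier ones. Throughout I will use the function $\phi \colon [0,1] \to \R$ defined by $\phi(x) := -x \log x$ (with $\phi(0) := 0$), which is continuous, concave, and satisfies $\phi(0) = 0$; thus $E(\UU) = \sum_{j \in J} \phi(p(U_j))$ for any finite measurable partition $\UU = (U_j)_{j \in J}$.

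Assertions \textup{(iii)} and \textup{(iv)} are essentially tautological. For \textup{(iii)}, if $\UU \cong \VV$ then up to null sets the multiset of measures $\{p(U_j)\}_{j \in J}$ coincides with $\{p(V_k)\}_{k \in K}$, and $E(\UU) = E(\VV)$ because the defining sum depends only on this multiset. For \textup{(iv)}, measure-preservation gives $p(T^{-1}(U_j)) = p(U_j)$ for each $j$, hence the partition $T^{-1}(\UU)$ has the same multiset of masses as $\UU$ and the same entropy, by the argument used for \textup{(iii)}.

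For \textup{(i)}, I will use the classical route through the nonnegativity of Kullback--Leibler divergence. Setting $p_{jk} := p(U_j \cap V_k)$, $p_j := p(U_j) = \sum_k p_{jk}$, and $q_k := p(V_k) = \sum_j p_{jk}$, a direct algebraic rearrangement yields
\[
E(\UU) + E(\VV) - E(\UU \vee \VV) = \sum_{(j,k) : p_{jk} > 0} p_{jk} \log \frac{p_{jk}}{p_j q_k},
\]
which is the KL divergence between the joint distribution $(p_{jk})$ and the product distribution $(p_j q_k)$ on $J \times K$. Jensen's inequality applied to the convex function $x \mapsto x \log x$ (or equivalently Gibbs' inequality) shows that this quantity is nonnegative, giving \textup{(i)}.

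For \textup{(ii)}, I will use the subadditivity of $\phi$ on $[0,1]$: since $\phi$ is concave with $\phi(0) = 0$, one has $\phi(a_1 + \cdots + a_m) \leq \phi(a_1) + \cdots + \phi(a_m)$ for all $a_1, \ldots, a_m \in [0,1]$ with $a_1 + \cdots + a_m \leq 1$. Suppose $\VV \succ \UU$: for each $k \in K$ choose $\iota(k) \in J$ with $V_k \subset U_{\iota(k)}$; since $\VV$ partitions $X$, one obtains modulo null sets $U_j = \bigsqcup_{k \in \iota^{-1}(j)} V_k$, hence $p(U_j) = \sum_{k \in \iota^{-1}(j)} p(V_k)$. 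Subadditivity of $\phi$ then gives
\[
E(\UU) = \sum_{j \in J} \phi(p(U_j)) \leq \sum_{j \in J} \sum_{k \in \iota^{-1}(j)} \phi(p(V_k)) = E(\VV).
\]
The only genuinely nontrivial step is \textup{(i)}, whose content is the standard Shannon subadditivity; the residual care required is merely bookkeeping about the negligible-set conventions implicit in the definition of finite measurable partition.
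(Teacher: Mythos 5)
Your proof is correct: parts (iii) and (iv) are immediate from the fact that $E$ depends only on the masses of the blocks, part (i) is the standard Shannon subadditivity via Gibbs' inequality, and part (ii) follows from the subadditivity of $x \mapsto -x\log x$ together with the observation that each $U_j$ is, up to null sets, the disjoint union of the $V_k$ it contains. The paper itself offers no argument, deferring to Section 4.3 of Katok--Hasselblatt, and your proof is essentially the standard one given there, so there is nothing to compare beyond noting that you have supplied the details the paper delegates to the reference.
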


\begin{proof}
See for example \cite[Section 4.3]{katok-hasselblatt}.
\end{proof}

Let $(X,\BB,p)$ be a probability space.
Suppose that $X$ is equipped with a \emph{measure-preserving action} of a semigroup $S$, that is, a family of measure-preserving  maps
$T_s \colon X \to X$, $s \in S$, such that
$$
T_{s_1} \circ T_{s_2} = T_{s_1 s_2} \quad  p-\text{a.e.} 
$$
for all $s_1,s_2 \in S$.
\par
 Let $\UU$ be a finite measurable partition of $X$.
Consider the map $h_\UU^{\text{KS}}  \colon \PP_{fin}(S) \to \R$ defined by
\begin{equation}
\label{e:def-hdim-U-KS}
h_\UU^{\text{KS}} (A) :=   E(\UU_A),
\end{equation}
where $\UU_A$ is defined by \eqref{e:def-U-A}.
 
\begin{proposition}
\label{p:properties-hK-U}
Let $(X,\BB,p)$ be a probability space equipped with a measure-preserving  action of a semigroup $S$ and let $\UU$ be a finite  measurable partition of $X$.
Then the  map $h_\UU^{\text{KS}} \colon \PP_{fin}(S) \to \R$ defined by \eqref{e:def-hdim-U-KS} is non-decreasing, subadditive, 
right-invariant, and uniformly bounded on singletons.
 \end{proposition}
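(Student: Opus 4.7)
The plan is to imitate, mutatis mutandis, the proofs of Propositions \ref{p:properties-h-U} and \ref{p:properties-hdim-U}, using Lemma \ref{l:properties-K-measurable-partition} in place of Lemmas \ref{l:properties-N-open-cover} and \ref{l:properties-D-open-cover}. The crucial qualitative difference is that item (iv) of Lemma \ref{l:properties-K-measurable-partition} is an \emph{equality} $E(T^{-1}(\UU)) = E(\UU)$ rather than an inequality; this is precisely what upgrades right-subinvariance (property (H2)) to genuine right-invariance in the statement.

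For monotonicity and subadditivity, I would proceed exactly as before. If $A \subset B$ are finite subsets of $S$, then $\UU_B$ is a refinement of $\UU_A$ (being a join taken over more indices), so property (ii) gives $E(\UU_A) \leq E(\UU_B)$. If $A$ and $B$ are disjoint finite subsets, then $\UU_{A \cup B} = \UU_A \vee \UU_B$, and property (i) yields $E(\UU_{A \cup B}) \leq E(\UU_A) + E(\UU_B)$. For arbitrary finite $A,B$, split $A \cup B = (A \setminus B) \cup B$ as a disjoint union and combine the two cases, as in Proposition \ref{p:properties-h-U}. Uniform boundedness on singletons is immediate: by (iv), $h_\UU^{\text{KS}}(\{s\}) = E(T_s^{-1}(\UU)) = E(\UU)$ for every $s \in S$, so one may take $M = E(\UU)$.

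For right-invariance, the computation proceeds as in Proposition \ref{p:properties-h-U}. For $s \in S$ and a finite subset $A \subset S$,
\[
\UU_{As} = \bigvee_{a \in A} T_{as}^{-1}(\UU) \cong \bigvee_{a \in A} (T_a \circ T_s)^{-1}(\UU) = T_s^{-1}\Bigl(\bigvee_{a \in A} T_a^{-1}(\UU)\Bigr) = T_s^{-1}(\UU_A),
\]
using the semigroup action relation $T_a \circ T_s = T_{as}$. Applying properties (iii) and (iv) of Lemma \ref{l:properties-K-measurable-partition}, which together give $E(T_s^{-1}(\UU_A)) = E(\UU_A)$, one concludes
\[
h_\UU^{\text{KS}}(As) = E(\UU_{As}) = E(T_s^{-1}(\UU_A)) = E(\UU_A) = h_\UU^{\text{KS}}(A).
\]

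The main (minor) subtlety to address is that the relation $T_{s_1} \circ T_{s_2} = T_{s_1 s_2}$ only holds $p$-almost everywhere, so the identity $\UU_{As} = T_s^{-1}(\UU_A)$ is valid only up to null-measure sets. However, since measurable partitions and their entropies are, throughout this section, understood modulo null sets (as explicitly stated in the definition of a finite measurable partition), this causes no difficulty: the congruence $\cong$ in the display above is the appropriate almost-everywhere equality, and $E$ is well defined on these equivalence classes. This is the only point where care is needed; everything else is a verbatim transcription of the proof of Proposition \ref{p:properties-h-U}.
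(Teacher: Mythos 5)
Your proposal is correct and follows essentially the same route as the paper, which simply says the proof is that of Proposition \ref{p:properties-h-U} mutatis mutandis, with Lemma \ref{l:properties-K-measurable-partition} in place of Lemma \ref{l:properties-N-open-cover}, and notes that the equality in item (iv) is what upgrades right-subinvariance to right-invariance. Your remark about the almost-everywhere validity of $T_{s_1}\circ T_{s_2}=T_{s_1 s_2}$ is a reasonable extra precision, already covered by the paper's convention that partitions are understood modulo null sets.
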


\begin{proof}
Mutatis mutandis, the proof  is  that of Proposition \ref{p:properties-h-U} with 
Lemma \ref{l:properties-K-measurable-partition} replacing Lemma \ref{l:properties-N-open-cover}.
Note that $h_\UU^{\text{KS}}$ is indeed right-invariant since in Lemma \ref{l:properties-K-measurable-partition}.(iv) an equality holds.
\end{proof} 

From Proposition \ref{p:properties-hK-U} and Theorem \ref{theOWGromov}, we deduce the following result.

\begin{theorem}
Let $(X,\BB,p)$ be a probability space equipped with a measure-preserving action of a cancellative 
left-amenable semigroup $S$ and let $\UU$ be a finite  measurable partition of $X$.
Then, for every left-F\o lner net $(F_i)_{i \in I}$ of $S$, the limit
$$
\eta_\UU^{\text{KS}}  := \lim_i \frac{h_\UU^{\text{KS}} (F_i)}{|F_i|}
$$
exists and is finite. Moreover, $\eta_\UU^{\text{KS}}$ does not depend on the choice of the left-F\o lner net $(F_i)_{i \in I}$.
\end{theorem}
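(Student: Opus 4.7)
The plan is to reduce this theorem to a direct application of Theorem \ref{theOWGromov} via Proposition \ref{p:properties-hK-U}. First I would verify that the map $h_\UU^{\text{KS}} \colon \PP_{fin}(S) \to \R$ defined in \eqref{e:def-hdim-U-KS} satisfies the three hypotheses (H1), (H2), and (H3) of Theorem \ref{theOWGromov}. But this is precisely the content of Proposition \ref{p:properties-hK-U}: $h_\UU^{\text{KS}}$ is subadditive (giving (H1)), right-invariant and \emph{a fortiori} right-subinvariant (giving (H2)), and uniformly bounded on singletons (giving (H3)).

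Since $S$ is assumed cancellative and left-amenable, all the hypotheses of Theorem \ref{theOWGromov} are now in place. Applying it directly yields a real number $\lambda \geq 0$, depending only on $h_\UU^{\text{KS}}$ (equivalently, only on the partition $\UU$ and the action), such that for every left-F\o lner net $(F_i)_{i \in I}$ of $S$ the net $\bigl(h_\UU^{\text{KS}}(F_i)/|F_i|\bigr)_{i \in I}$ converges to $\lambda$. We then define $\eta_\UU^{\text{KS}} := \lambda$; finiteness is automatic from the statement of Theorem \ref{theOWGromov}, and is also visible \emph{a priori} from the bound $h_\UU^{\text{KS}}(A) \leq M|A|$ established in the proof of Theorem \ref{theOWGromov} from (H1) and (H3). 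Independence from the choice of left-F\o lner net is likewise part of the conclusion of Theorem \ref{theOWGromov}.

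Thus no new work is required beyond invoking the two preceding results in sequence. There is no genuine obstacle in this proof; the strategy is simply to observe that the measure-theoretic entropy setting fits into the general framework developed for subadditive right-subinvariant functions, with the shannon-type inequalities of Lemma \ref{l:properties-K-measurable-partition} playing the role that Lemma \ref{l:properties-N-open-cover} played for topological entropy and Lemma \ref{l:properties-D-open-cover} played for topological mean dimension. The only point worth mentioning explicitly in the write-up is that, unlike the two previous applications, here condition (H2) holds in the stronger form of right-invariance (equality rather than inequality), a consequence of the measure-preserving hypothesis on the action.
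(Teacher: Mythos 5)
Your proposal is correct and matches the paper exactly: the result is deduced there, without further argument, from Proposition \ref{p:properties-hK-U} (which supplies (H1), (H2) via right-invariance, and (H3)) combined with Theorem \ref{theOWGromov}. Nothing more is needed.
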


The quantity  $0 \leq \eta^{\text{KS}}  \leq + \infty$ defined by $\eta^{\text{KS}}  := \sup_\UU \eta_\UU^{\text{KS}}$, where $\UU$ runs over all finite measurable partitions of $X$, is 
the \emph{measure-theoretic entropy}, or \emph{Kolmogoroff-Sinai entropy}, of the 
measure-preserving dynamical  system $(X,S)$.


\end{document}